\newcommand{\C}{\mathbb{C}}
\newcommand{\R}{\mathbb{R}} 
\newcommand{\NN}{\mathbb{N}} 
\newcommand{\Q}{\mathbb{Q}}
\renewcommand{\to}{\longrightarrow}
\newtheorem{Thm}{Theorem}[section]		
\newtheorem{Cor}[Thm]{Corollary}
\newtheorem*{Ex}{Example}
\theoremstyle{definition}
\newtheorem{Definition}[Thm]{Definition}
\theoremstyle{remark}
\newtheorem*{rmk}{Remark}
\newtheorem*{ex}{Example}
\newtheorem{ind}[]{{\rm\it Indice}}
\title{The Jensen-P\'olya program for various $L$-functions}
\author[Wagner]{Ian Wagner}
\begin{document}
\numberwithin{equation}{section}

\begin{abstract}
P\'{o}lya proved in 1927 that the Riemann hypothesis is equivalent to the hyperbolicity of all of the Jensen polynomials of degree $d$ and shift $n$ for the Riemann Xi-function.  Recently, Griffin, Ono, Rolen, and Zagier proved that for each degree $d \geq 1$ all of the Jensen polynomials for the Riemann Xi-function are hyperbolic except for possibly finitely many $n$.  Here we extend their work by showing the same statement is true for suitable $L$-functions.  This offers evidence for the generalized Riemann hypothesis.
\end{abstract}

\maketitle

\section{Introduction and statement of results}
By extending notes of Jensen, P\'{o}lya \cite{P} proved that the Riemann hypothesis (RH) is equivalent to the hyperbolicity of the \textit{Jensen polynomials} for Riemann's Xi-function.  The Riemann Xi-function is the entire function that shifts the zeros of the Riemann zeta-function, $\zeta(s)$, from the line with real part $\frac{1}{2}$ to the real line.  It is given by 
\begin{equation}
\Xi(z) := \frac{1}{2} \left( -z^{2} - \frac{1}{4} \right) \pi^{\frac{iz}{2} - \frac{1}{4}} \Gamma \left(-\frac{iz}{2}+\frac{1}{4} \right) \zeta \left(-iz + \frac{1}{2} \right),
\end{equation}
where $\Gamma(s)$ is the gamma function.  We can consider a change of variable and define the coefficients $\gamma(n)$ by the Taylor expansion of this new function:
\begin{equation}
\Xi_{1}(x) = 8 \cdot \Xi \left( i \sqrt{x} \right) =: \sum_{n \geq 0} \frac{\gamma(n)}{n!} \cdot x^{n}.
\end{equation}
P\'{o}lya originally proved that RH is equivalent to $\Xi_{1}$ having an infinite product expansion of the form $\Xi_{1}(x) = ce^{\sigma x} \prod_{n \geq 1} \left( 1 + \frac{x}{x_{n}} \right)$, where $c$ is a constant, $\sigma \geq 0$, $x_{n} \in \R^{+}$, and $\sum x_{n}^{-1} < \infty$.  This condition can be encoded by the hyperbolicity of Jensen polynomials.

We say that a polynomial $f \in \R[X]$ is \textit{hyperbolic} if all of its roots are real.  Given a sequence $a: \NN \to \R$ and positive integers $d$ and $n$, the associated \textit{Jensen polynomial of degree $d$ and shift $n$} is defined by
\begin{equation}
J^{d,n}_a(X) := \sum_{j=0}^d {d \choose j} a(n+j) X^j.
\end{equation}
RH is equivalent to the hyperbolicity of $J_{\gamma}^{d,n}(X)$ for all $d$ and $n$ and where $\gamma$ is given in equation ($1.2$) as the Taylor coefficients of $\Xi_{1}(x)$ \cite{CV, DL, P}.  The historical context of this approach to RH and a commentary on the results of \cite{GORZ} is given in \cite{Bo}.  Due to the difficulty of proving RH, research before \cite{GORZ} focused on establishing hyperbolicity for all shifts $n$ for small $d$.  Work of Csordas, Norfolk, and Varga and Dimitrov and Lucas \cite{CNV, DL} shows that $J_{\gamma}^{d,n}(X)$ is hyperbolic for all $n$ when $d \leq 3$.  In \cite{GORZ}, Griffin, Ono, Rolen, and Zagier prove that for any $d \geq 1$, $J_{\gamma}^{d,n}(X)$ is hyperbolic with at most finitely exceptions $n$.  They prove this by showing that for a fixed $d$,
\begin{equation*}
\lim_{n \to \infty} J^{d,n}_{\gamma}(\alpha(n) X + \beta(n)) = H_{d}(X),
\end{equation*}
where $H_{d}(X)$ is the $d$-th Hermite polynomial and $\alpha(n)$ and $\beta(n)$ are certain sequences.  The Hermite polynomials are known to have real distinct roots so $J_{\gamma}^{d,n}(X)$ must also eventually have real distinct roots.  In fact, Griffin, Ono, Rolen, and Zagier show that there is a family of sequences whose Jensen polynomials share the same property.  
\begin{Definition} \label{growth}
A real sequence $a: \NN \to \R$ is \textbf{Hermite-Jensen} if there exists sequences of positive real numbers $\{A(n)\}$ and $\{\delta(n) \}$ with $\delta(n)$ tending to zero, which satisfy
\begin{equation}
\log \left( \frac{a(n+j)}{a(n)} \right) = A(n) j - \delta(n)^{2} j^{2} + o(\delta(n)^d) \qquad \text{as } \ n \to \infty
\end{equation}
for some $d \geq 1$ and all $0 \leq j \leq d$.
\end{Definition}
\begin{rmk}
In \cite{GORZ} the authors give a more general statement about the asymptotic behavior needed for the Jensen polynomials of a sequence to converge to other families of polynomials.
\end{rmk}
In order to show that the Taylor coefficients of Riemann's Xi-function are Hermite-Jensen, an arbitrary precision asymptotic formula for the derivatives $\Xi^{(2n)}(0)$ was found in \cite{GORZ}.
To extend the results in \cite{GORZ} we show that any \textit{good} Dirichlet series is Hermite-Jensen.
\begin{Definition} \label{HJ}
A Dirichlet series $L(s) = \sum_{n \geq 1} a(n) n^{-s}$ is \textbf{good} if the following hold.
\begin{enumerate}
\item $L(s)$ has a completed form, $\Lambda(s)$ given in equation \eqref{completed}, that has an integral representation of the form
\begin{equation*}
\Lambda(s)=N^{\frac{s}{2}}  \int_{0}^{\infty} \left[ f(t) - f(\infty) \right] t^{s} \frac{dt}{t},
\end{equation*}
where the function $f(t)$ has the form
\begin{equation*}
f(t) = \alpha(0) + \sum_{n \geq n_{0}} \alpha(n) e^{-\pi nt},
\end{equation*}
where $f(\infty) = \alpha(0)$.
\item The function $f(t)$ satisfies
\begin{equation*}
f \left( \frac{1}{Nt} \right) = \epsilon N^{\frac{k}{2}} t^{k} f(t),
\end{equation*}
where $\epsilon \in \{ \pm 1 \}$ which gives rise to an analytic continuation and a functional equation $\Lambda(s) = \epsilon \Lambda(k-s)$ for some $k \in \Q$.
\item The coefficients of $\Lambda(s)$ are real.
\end{enumerate}

\end{Definition}
For a good Dirichlet series $L(s)$, we define  \begin{equation}
\Xi(z) := \begin{cases} \left(-z^{2} - \frac{k^{2}}{4} \right) \Lambda \left( \frac{k}{2} -iz \right)  & \text{if } \Lambda(s) \ \text{has a pole at } s =k \\
\Lambda \left( \frac{k}{2} -iz \right) & \text{otherwise}.
\end{cases} 
\end{equation}
If $\Lambda(s)=\Lambda(k-s)$, then we define
\begin{equation}
\Xi_{1}(x):= \Xi(i \sqrt{x}) =: \sum_{n \geq 0} \frac{\gamma(n)}{n!} x^{n},
\end{equation}
where 
\begin{equation*}
\gamma(n) = (-1)^n \frac{n!}{(2n)!} \cdot \Xi^{(2n)}(0).
\end{equation*}
If $\Lambda(s)= -\Lambda(k-s)$, then define
\begin{equation}
\Xi_{1}(x) := \frac{\Xi(i \sqrt{x})}{\sqrt{x}} =: \sum_{n \geq 0} \frac{\gamma(n)}{n!} x^{n},
\end{equation}
where 
\begin{equation*}
\gamma(n) = i^{2n+1} \frac{n!}{(2n+1)!} \cdot \Xi^{(2n+1)}(0).
\end{equation*}
\begin{Thm} \label{Main}
Suppose that $L(s)$ is a good Dirichlet series.  Then $J_{\gamma}^{d,n}(X)$ is hyperbolic with at most finitely many exceptions $n$ for each fixed $d \geq 1$.
\end{Thm}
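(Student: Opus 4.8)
The plan is to deduce Theorem~\ref{Main} from the criterion of \cite{GORZ}: if a real sequence $a\colon\NN\to\R$ is Hermite--Jensen in the sense of Definition~\ref{growth}, then for each $d$ there are sequences $\alpha(n)>0$, $\beta(n)$ such that $J^{d,n}_a(\alpha(n)X+\beta(n))$ converges coefficientwise to the Hermite polynomial $H_d(X)$; since $H_d$ has $d$ simple real roots and hyperbolicity is preserved under small perturbations, $J^{d,n}_a(X)$ is then hyperbolic for all but finitely many $n$. Thus the theorem reduces to showing that the Taylor coefficients $\gamma(n)$ of $\Xi_1$ attached to a good Dirichlet series form a Hermite--Jensen sequence, and the whole content of the proof is an arbitrary-precision asymptotic expansion of $\gamma(n)$, generalizing the one obtained in \cite{GORZ} for the derivatives $\Xi^{(2n)}(0)$ in the Riemann case.

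First I would turn the integral representation of Definition~\ref{HJ} into a Fourier transform. The substitution $t=e^{u}/\sqrt{N}$, after absorbing the $\Gamma$-type factors, turns $\Xi(z)$ into $\int_{-\infty}^{\infty}\Phi(u)\,e^{-izu}\,du$, where in the case with no pole one gets $\Phi(u)=\bigl(f(e^{u}/\sqrt{N})-\alpha(0)\bigr)e^{ku/2}=\sum_{m\ge n_0}\alpha(m)\,e^{ku/2-\pi m e^{u}/\sqrt{N}}$, while in the polar case the prefactor $-z^{2}-\tfrac{k^{2}}{4}$ acts as the differential operator $\tfrac{d^{2}}{du^{2}}-\tfrac{k^{2}}{4}$, which annihilates the two exponentials $e^{\pm ku/2}$ arising from $f(\infty)=\alpha(0)$ and leaves $\Phi(u)=\sum_{m\ge n_0}\alpha(m)\bigl(\tfrac{\pi^{2}m^{2}}{N}e^{2u}-\tfrac{(k+1)\pi m}{\sqrt{N}}e^{u}\bigr)e^{ku/2-\pi m e^{u}/\sqrt{N}}$. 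The functional equation of Definition~\ref{HJ} forces $\Phi(-u)=\epsilon\,\Phi(u)$, so $\Phi$ is even when $\epsilon=1$ and odd when $\epsilon=-1$, matching the two normalizations of $\Xi_1$; reading off the Taylor coefficients at $z=0$ then yields
\[
\gamma(n)=c_n\int_0^{\infty}u^{2n+r}\,\Phi(u)\,du,\qquad r=\begin{cases}0,&\epsilon=1,\\[2pt]1,&\epsilon=-1,\end{cases}
\]
for explicit constants $c_n>0$.

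The core step is the asymptotic analysis of this moment as $n\to\infty$. For large $u$ the integrand is dominated by its $m=n_0$ summand, whose logarithm $(2n+r)\log u+(\text{linear in }u)-\pi n_0 e^{u}/\sqrt{N}$ has a unique maximum at $u=u^{\ast}(n)$ with $u^{\ast}(n)\sim\log n$. Applying Laplace's method about $u^{\ast}(n)$, controlling the tails, and bounding the summands with $m>n_0$ (which are smaller by a factor exponentially small in $n/\log n$, and whose sum converges since the coefficients $\alpha(m)$ of a good Dirichlet series are automatically subexponential, $\limsup_m|\alpha(m)|^{1/m}\le1$) then produces, for every $K$, an expansion of $\gamma(n)$ as an explicit main term (a product of powers, exponentials, and $\Gamma$-values in $L$ and $n$) times $1+b_1/L+\cdots+b_K/L^{K}+O(L^{-K-1})$, where $L\sim\log n$ is an explicit logarithmic quantity; in particular $\gamma(n)$ eventually has the sign of $\alpha(n_0)$, so the logarithm in Definition~\ref{growth} makes sense. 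Feeding this expansion into $\log\bigl(\gamma(n+j)/\gamma(n)\bigr)$ and expanding in $j$ over the fixed range $0\le j\le d$ identifies the sequences $A(n)$ and $\delta(n)$ and, since a fixed (depending on $d$) number of terms of the $1/L$-expansion suffices, shows the remainder is $o(\delta(n)^{d})$. Hence $\{\gamma(n)\}$ is Hermite--Jensen, and Theorem~\ref{Main} follows.

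The main obstacle is precisely this quantitative last step: one must reproduce the \emph{arbitrary precision} saddle-point bookkeeping of \cite{GORZ}, but with the single theta-like kernel there replaced by a general $f(t)=\alpha(0)+\sum_{m\ge n_0}\alpha(m)e^{-\pi m t}$. This forces one to control the coefficients $\alpha(m)$ uniformly enough to interchange the sum and the integral and to bound the $m>n_0$ tail, to carry the rational weight $k$ through every estimate, to treat the polar and non-polar cases uniformly through the operator $\tfrac{d^{2}}{du^{2}}-\tfrac{k^{2}}{4}$, and --- the delicate point --- to keep track of how many terms of the $1/L$-expansion are needed for a given $d$. No new idea beyond \cite{GORZ} is required, but all of their estimates have to be re-examined in this generality.
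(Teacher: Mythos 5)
Your proposal follows essentially the same route as the paper: reduce Theorem~\ref{Main} to the Hermite--Jensen criterion of \cite{GORZ}, obtain an arbitrary-precision asymptotic expansion of $\gamma(n)$ by a saddle-point/Laplace analysis of the integral representation in Definition~\ref{HJ} with the integrand replaced by its dominant $m=n_0$ term, and then expand $\log\bigl(\gamma(n+j)/\gamma(n)\bigr)$ in $j$ to read off $A(n)$ and $\delta(n)$. Your substitution $t=e^{u}/\sqrt{N}$ and the moment integral $\int u^{2n+r}\Phi(u)\,du$ are just a change of variables away from the paper's $F(n)=2^{-n}\int(f(t)-f(\infty))N^{k/4}t^{k/2-1}(1+(-1)^n\epsilon)(\log N+2\log t)^n\,dt$, and your treatment of the pole via the operator $\tfrac{d^{2}}{du^{2}}-\tfrac{k^{2}}{4}$ matches the paper's combination $8\binom{n}{2}F(n-2)-k^{2}F(n)$. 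One correction is needed: the correction series produced by the saddle-point method must be (and in the paper is) an expansion in powers of $1/n$ with coefficients $b_k\in\Q(L)$, not in powers of $1/L\sim 1/\log n$ as you wrote; since $\delta(n)\asymp n^{-1/2}$, a remainder of size $O(L^{-K-1})$ is never $o(\delta(n)^{d})$, so the final step as literally stated would fail, whereas with the $1/n$-expansion a fixed number of terms (depending on $d$) does suffice.
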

\begin{rmk}
This offers evidence for the generalized Riemann Hypothesis (GRH).
\end{rmk}
\begin{rmk}
Notice that in order to study the Jensen polynomials associated to $\gamma(n)$ we must understand the derivatives of $\Xi(z)$ at $z=0$, or equivalently the derivatives of $\Lambda(s)$ at $s = \frac{k}{2}$.  In order to prove Theorem \ref{Main} we will prove an asymptotic formula with arbitrary precision for these derivatives.
\end{rmk}
\begin{rmk}
All good $L$-series satisfy the \textit{Gaussian Unitary Ensemble} (GUE) random matrix prediction in derivative aspect.  Dyson, Montgomery, and Odlyzko \cite{DY, M, O} conjectured that the non-trivial zeros of the Riemann zeta function and other suitable $L$-functions are distributed like the eigenvalues of random Hermitian matrices.  These eigenvalues and the roots of the suitably normalized Hermite polynomials, $H_{d}(X)$, as $d \to \infty$ both satisfy Wigner's Semicircular Law (Chapter 3 of \cite{AGZ}).  The roots of $J_{\gamma}^{d,0}(X)$, as $d \to \infty$, approximate the zeros of $\Lambda \left( \frac{k}{2} -iz \right)$ \cite{P} so these roots are also expected to satisfy Wigner's Semicircular Law.  The derivatives of the completed $L$-function  are also predicted to satisfy GUE and higher derivatives correspond to $n$ growing in $J_{\gamma}^{d,n}(X)$ so it is natural to study $J_{\gamma}^{d,n}(X)$ as $n \to \infty$.  For a good $L$-function the $J_{\gamma}^{d,n}(X)$ converge to the Hermite polynomials which satisfy GUE in degree aspect.  This is what is meant by the statement that good $L$-functions satisfy GUE in derivative aspect.
\end{rmk}
The following corollaries give some examples of Hermite-Jensen Dirichlet series.
\begin{Cor} \label{Dir}
Dirichlet $L$-functions for real primitive self-dual characters are good.
\end{Cor}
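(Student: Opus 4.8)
The plan is to verify, for every real primitive Dirichlet character $\chi$ modulo $q$, the three defining conditions of Definition~\ref{HJ} for $L(s)=L(s,\chi)=\sum_{n\ge1}\chi(n)n^{-s}$; goodness then yields the hyperbolicity statement through Theorem~\ref{Main}. The case $q=1$ is the trivial character, where $L(s,\chi)=\zeta(s)$ and goodness is exactly the Riemann situation underlying \cite{GORZ}, so I would assume $q>1$. Condition~(3) comes essentially for free: a Dirichlet character is real precisely when it is self-dual ($\chi=\bar\chi$), and a real $\chi$ is $\{0,\pm1\}$-valued, so the coefficients of $L(s,\chi)$, and of its completion $\Lambda(s)$ constructed below, are real.

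For conditions~(1) and~(2) I would use the classical theta-series integral representation, split according to the parity of $\chi$. If $\chi(-1)=1$, set $f(t):=\sum_{n\in\Z}\chi(n)e^{-\pi n^{2}t}$ (equivalently $2\sum_{n\ge1}\chi(n)e^{-\pi n^{2}t}$, since $\chi(0)=0$), with $N=q^{2}$ and $k=\tfrac12$; if $\chi(-1)=-1$, set $f(t):=\sum_{n\in\Z}n\chi(n)e^{-\pi n^{2}t}$, with $N=q^{2}$ and $k=\tfrac32$. In both cases $f(t)=\alpha(0)+\sum_{n\ge n_{0}}\alpha(n)e^{-\pi nt}$ with $\alpha(0)=f(\infty)=0$ and the nonzero $\alpha(n)$ supported on perfect squares, so $f$ has the required form, and a termwise Mellin transform gives
\begin{equation*}
N^{s/2}\int_{0}^{\infty}\bigl[f(t)-f(\infty)\bigr]t^{s}\,\frac{dt}{t}=2\,q^{s}\,\Gamma(s)\,\pi^{-s}\,L(2s-\mathfrak{a},\chi),
\end{equation*}
with $\mathfrak{a}=0$ in the even case and $\mathfrak{a}=1$ in the odd case; this equals twice the classical completed Dirichlet $L$-function with rescaled argument, which I take as the completed form $\Lambda(s)$, so condition~(1) holds. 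Because $f$ decays exponentially as $t\to\infty$ (as $f(\infty)=0$) and, via the transformation law below, as $t\to0^{+}$, this Mellin integral is entire; hence $\Lambda(s)$ has no pole at $s=k$, and one uses the ``otherwise'' branch of the definition of $\Xi(z)$.

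Next I would establish condition~(2), the modular transformation $f(1/(Nt))=\epsilon\,N^{k/2}t^{k}f(t)$. This is the transformation law of the $\theta$-series of a primitive character, obtained by twisted Poisson summation: writing $\chi(n)=\tau(\bar\chi)^{-1}\sum_{a\bmod q}\bar\chi(a)e^{2\pi ian/q}$ and applying the Gaussian summation formula produces the multiplier $\epsilon=\tau(\chi)q^{-1/2}$ in the even case and $\epsilon=\tau(\chi)(i q^{1/2})^{-1}$ in the odd case. Since $\chi$ is real, $\bar\chi=\chi$, and from $|\tau(\chi)|^{2}=q$ together with $\overline{\tau(\chi)}=\chi(-1)\tau(\chi)$ one gets $\tau(\chi)^{2}=\chi(-1)q$; hence $\epsilon^{2}=1$, i.e.\ $\epsilon\in\{\pm1\}$ (in fact $\epsilon=+1$, by Gauss's evaluation of the sign of the quadratic Gauss sum, although Definition~\ref{HJ} requires only $\epsilon\in\{\pm1\}$). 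Substituting $t\mapsto1/(Nt)$ in the integral for $\Lambda(s)$ then produces both the analytic continuation and the functional equation $\Lambda(s)=\epsilon\Lambda(k-s)$, with the same $\epsilon$ and with $k=\tfrac12$ or $\tfrac32$, which lies in $\Q$. Since $\epsilon=+1$ we are in the case $\Lambda(s)=\Lambda(k-s)$ and use the corresponding definition of $\Xi_{1}(x)$; thus $L(s,\chi)$ is good, and Theorem~\ref{Main} applies.

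The substantive ingredients are classical and well known: the evaluation $|\tau(\chi)|^{2}=q$, the twisted Poisson summation giving the $\theta$-transformation, and the fact that $L(s,\chi)$ is entire --- all of which genuinely use that $\chi$ is primitive. The real obstacle is therefore organizational rather than conceptual: reconciling the classical completion, whose archimedean factor is of $\Gamma(s/2)$-type, with the $\Gamma(s)$-template of Definition~\ref{HJ} (this is what forces the $s\mapsto2s-\mathfrak{a}$ rescaling and the half-integral weights $k$), treating the even and odd cases uniformly, and tracking the conductor correctly through the change of variables. I expect no new ideas beyond careful bookkeeping, so the hard part is simply getting every normalization to line up with the conventions of Definition~\ref{HJ}.
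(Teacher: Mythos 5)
Your proposal is correct and follows essentially the same route as the paper: the twisted theta series $\theta_\chi$, its transformation law via Poisson summation and the Gauss sum evaluation $\tau(\chi)=i^{\nu}\sqrt{N}$ for real primitive self-dual $\chi$, and the classical split integral representation giving $\Lambda(\chi,s)=\Lambda(\chi,1-s)$ with real coefficients. The only difference is bookkeeping: you rescale $s\mapsto 2s-\mathfrak{a}$ to force a $\Gamma(s)$-type archimedean factor with $k=\tfrac12$ or $\tfrac32$, whereas the paper keeps the $\Gamma\left(\frac{s+\nu}{2}\right)$ completion with $k=1$ and absorbs the discrepancy into the conductor (effectively $N^2$) in the integral representation --- both normalizations fit Definition~\ref{HJ} equally well.
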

\begin{Cor} \label{Mod}
Let $f \in S^{new}_{2k}(\Gamma_{0}(N))$ be a weight $2k$ modular newform on $\Gamma_{0}(N)$, then the modular $L$-function associated to $f$ is good.
\end{Cor}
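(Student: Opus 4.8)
The plan is to verify, one at a time, the three conditions of Definition \ref{HJ} for the $L$-function of $f$; together with Theorem \ref{Main} this then yields the hyperbolicity of the Jensen polynomials $J^{d,n}_\gamma$ for all but finitely many $n$. Write the normalized newform as $f(z)=\sum_{n\ge1}a_f(n)q^n$ with $q=e^{2\pi i z}$, so that $a_f(1)=1$ and $L(f,s)=\sum_{n\ge1}a_f(n)n^{-s}$; since $f$ is a form on $\Gamma_0(N)$ its nebentypus is trivial, and since it is a cusp form its value at $i\infty$ is $0$.

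For condition (1) I would invoke Hecke's integral representation. After the substitution $z=it/2$ (which turns $q^n$ into $e^{-\pi n t}$), the Mellin transform of $f$ gives
\[
\pi^{-s}\Gamma(s)L(f,s)=\int_0^\infty f\!\left(\tfrac{it}{2}\right)t^s\,\frac{dt}{t}.
\]
Hence the completed $L$-function $\Lambda(s)$, in the normalization adapted to the $e^{-\pi n t}$ convention (a fixed power of $N$ times $\pi^{-s}\Gamma(s)L(f,s)$), has exactly the integral representation demanded by Definition \ref{HJ}(1): the function appearing there is $t\mapsto f(it/2)=\sum_{n\ge1}a_f(n)e^{-\pi n t}$, with $\alpha(0)=f(i\infty)=0$, with $n_0=1$, and with $\alpha(n)=a_f(n)$.

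For condition (2) I would read off the transformation law from the Fricke involution. Because $f$ is a newform of trivial character it is an eigenform of $W_N=\left(\begin{smallmatrix}0&-1\\N&0\end{smallmatrix}\right)$, say $f|_{2k}W_N=\epsilon_f f$ with $\epsilon_f\in\{\pm1\}$; unwinding the slash action gives $f(-1/(Nz))=\epsilon_f N^{k}z^{2k}f(z)$, and evaluating at $z=it/2$ converts this into an identity of the shape $\phi\bigl(1/(Mt)\bigr)=\epsilon\,M^{k}t^{2k}\phi(t)$ for $\phi(t)=f(it/2)$, where $\epsilon=\epsilon_f(-1)^k\in\{\pm1\}$ and $M$ is a positive rational multiple of $N$. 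This is precisely Definition \ref{HJ}(2), with the parameter called $k$ there equal to the weight $2k$ of $f$ (in particular rational). Splitting the Mellin integral at $t=M^{-1/2}$ and inserting this transformation law in the part over $(0,M^{-1/2})$ then produces, in the standard Riemann--Hecke way, the analytic continuation of $\Lambda(s)$ --- to an entire function, since $f$ is cuspidal --- together with the functional equation $\Lambda(s)=\epsilon\,\Lambda(2k-s)$. Both signs $\epsilon=\pm1$ are allowed by Definition \ref{HJ}, and since $L(f,s)$ has no pole one lands in the second branch of the definition of $\Xi(z)$.

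For condition (3): $f$ is a simultaneous Hecke eigenform, and because the nebentypus is trivial the Hecke operators $T_n$ with $\gcd(n,N)=1$ are self-adjoint for the Petersson inner product, so their eigenvalues $a_f(n)$ are real; together with $a_f(p)\in\{0,\pm p^{k-1}\}$ for $p\mid N$ and Hecke multiplicativity --- equivalently, $f$ is self-dual, $\overline{f}=f$ --- this gives $a_f(n)\in\R$ for every $n$. Consequently the coefficients of $\Lambda(s)$, and with them the numbers $\gamma(n)$ attached to $\Xi_1$, are real, which is Definition \ref{HJ}(3). With (1)--(3) in hand, $L(f,s)$ is good, which is the assertion of the corollary. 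The only real obstacle is bookkeeping: nailing down the correct powers of $2$, $\pi$ and $N$ so that the integral representation and the functional equation fit the exact template of Definition \ref{HJ}; the three structural inputs --- Hecke's integral representation, the action of the Fricke/Atkin--Lehner involution on newforms, and the reality of the Hecke eigenvalues for trivial nebentypus --- are entirely classical.
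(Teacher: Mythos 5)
Your proposal is correct and takes essentially the same route as the paper: the paper also verifies the three conditions of Definition \ref{HJ} via Hecke's Mellin-transform representation $\Lambda(f,s)=N^{s/2}(2\pi)^{-s}\Gamma(s)L(f,s)$, the Fricke/Atkin--Lehner transformation $f(i/(Ny))=i^{k}\epsilon_{f}N^{k/2}y^{k}f(iy)$ giving $\Lambda(f,s)=i^{k}\epsilon_{f}\Lambda(f,k-s)$, and the reality of the coefficients. Your version is just a bit more scrupulous about bookkeeping (the $z=it/2$ substitution to match the $e^{-\pi n t}$ convention of Definition \ref{HJ}, and the Petersson/self-duality argument for reality of the Hecke eigenvalues, which the paper simply assumes), but the underlying argument is the same.
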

\begin{Cor} \label{Ded}
The Dedekind zeta-function for a number field is good.
\end{Cor}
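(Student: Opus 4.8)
By Theorem~\ref{Main} it suffices to show that the Dedekind zeta-function $\zeta_K(s)=\sum_{\mathfrak{a}}N(\mathfrak{a})^{-s}=\sum_{n\ge 1}a_K(n)n^{-s}$, where $a_K(n)$ counts the integral ideals of $\mathcal{O}_K$ of norm $n$, is good in the sense of Definition~\ref{HJ}. The analytic input is Hecke's theorem: $\zeta_K$ continues meromorphically to $\C$ with only a simple pole at $s=1$, its completed form $\Lambda_K(s)=|d_K|^{s/2}(\pi^{-s/2}\Gamma(s/2))^{r_1}(2(2\pi)^{-s}\Gamma(s))^{r_2}\zeta_K(s)$ has simple poles at $s=0$ and $s=1$ and satisfies $\Lambda_K(s)=\Lambda_K(1-s)$, and this functional equation is proved by realizing $\Lambda_K$ as a Mellin transform of a theta series attached to $\mathcal{O}_K$. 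Thus $k=1$, $\epsilon=+1$ and $N=|d_K|$; since $\Lambda_K$ has a pole at $s=k=1$ the first branch of the definition of $\Xi$ applies, both poles sit at $z^2=-\tfrac{1}{4}$, and $\Xi(z)=(-z^2-\tfrac{1}{4})\Lambda_K(\tfrac{1}{2}-iz)$ is entire and even, so that $\Xi_1(x)=\Xi(i\sqrt{x})$ is the relevant normalization.

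The real content is condition~(1): packaging Hecke's integral representation as a \emph{single} one-variable Mellin integral of the shape in Definition~\ref{HJ}, with $f$ of the exponential-sum form $f(t)=\alpha(0)+\sum_{n\ge n_0}\alpha(n)e^{-\pi n t}$. When $[K:\Q]\le 2$ this is classical: for $K=\Q$ it is the Jacobi theta function, and for imaginary quadratic $K$ — where the archimedean factor already collapses to a single $\Gamma$ — one uses the theta series of the binary quadratic forms representing the ideal classes. For a general number field Hecke's theta series is a function of the $r_1+r_2$ scaling parameters at the archimedean places, and the plan is to collapse it to one variable by integrating over a fundamental domain for the action of the unit group on the norm-one hypersurface of the positive cone, retaining the overall scaling $t$ as the remaining coordinate. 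After this reduction the surviving integral has the required form, $\alpha(n)$ being a count of norm-$n$ elements of fixed ideal representatives against a bounded archimedean weight, and $f(\infty)=\alpha(0)$ absorbing precisely the terms responsible for the poles of $\Lambda_K$. The factorizations of $\zeta_K$ available in special cases, such as $\zeta_K=\zeta\cdot L(\cdot,\chi_D)$ for quadratic $K$, do not shortcut this, since a product of good series need not be good.

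Condition~(2) — the transformation $f(1/(Nt))=\epsilon N^{k/2}t^k f(t)$ and the induced functional equation $\Lambda_K(s)=\Lambda_K(k-s)$ — is Poisson summation for the Minkowski lattice $\mathcal{O}_K\subset K\otimes\R\cong\R^{r_1}\times\C^{r_2}$, whose covolume is a fixed multiple of $\sqrt{|d_K|}$; self-duality of the Gaussian gives $\epsilon=+1$ along with the usual elementary correction term reflecting the poles. Condition~(3) is immediate, since $a_K(n)\in\Z_{\ge 0}$ and $|d_K|$ and the $\Gamma$-factors are real, so every Taylor coefficient $\gamma(n)$ of $\Xi_1$ is real. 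The one genuinely delicate point is the dimension reduction above: for $[K:\Q]>2$ the Hecke theta is honestly higher-dimensional, and carrying out the unit-group integration over the norm-one surface while verifying that the output $f$ has the precise exponential-sum form demanded by Definition~\ref{HJ}(1) — and correctly identifying its constant term $\alpha(0)$ — is where the effort concentrates. Once this is in hand, Theorem~\ref{Main} gives the conclusion.
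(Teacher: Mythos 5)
Your route is the same as the paper's: Hecke's integral representation of $\Lambda_K$ via the theta series attached to ideal-class representatives, reduction to a one-variable Mellin integral by decomposing ${\bf{R}}_{+}^{*}$ as (norm-one hypersurface)\,$\times\,\R_{+}^{*}$ and integrating over a fundamental domain for the unit-group action, the theta inversion formula for the functional equation $\Lambda_K(s)=\Lambda_K(1-s)$ with $k=1$, $\epsilon=+1$, $N=|d_K|$, and $f(\infty)$ accounting for the poles. You also correctly single out the only nontrivial point: whether the reduced one-variable function $f(t)$ has the exact shape required by Definition~\ref{HJ}(1). (As an aside, the opening ``by Theorem~\ref{Main} it suffices to show $\zeta_K$ is good'' is backwards: goodness \emph{is} the statement of the corollary; Theorem~\ref{Main} is what one applies afterwards.)

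The gap is that you leave this crucial step as a plan and then assert that ``the surviving integral has the required form,'' which is not true as stated. For $[K:\Q]=j>1$ the fundamental-domain integration produces
\begin{equation*}
f(\mathfrak{a},t)=\frac{1}{w_K}\int_{F}\sum_{a\in\mathfrak{a}}e^{-\pi d_{\mathfrak{a}}^{-1/j}\langle ax,a\rangle\, t^{1/j}}\,d^{*}x ,
\end{equation*}
a continuous superposition of exponentials in $t^{1/j}$, not a series $\alpha(0)+\sum_{n\ge n_0}\alpha(n)e^{-\pi nt}$; the paper's own expansion $f(\mathfrak{a}_i,t)=f(\infty)+C_i e^{-\pi m_{\mathfrak{a}_i}(t/d_{\mathfrak{a}_i})^{1/j}}+\cdots$ makes this explicit. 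Consequently the paper does not feed the Dedekind case verbatim into Theorem~\ref{Asy}: its asymptotic theorem for $F_i(n)$ replaces the saddle-point equation by a $j$-dependent one (with $L_i\approx j\log(n/\log n)$ and $e^{L_i/j}$ in the defining equation), i.e.\ the saddle-point analysis is redone for the leading decay $Ce^{-ct^{1/j}}$. To close your argument you must either perform a change of variables forcing the integrand into the stated exponential-sum form (which alters the power of $t$ in the Mellin kernel and hence requires a correspondingly relaxed reading of Definition~\ref{HJ}(1)), or carry out the unit-group reduction explicitly and rerun the saddle-point method with the $t^{1/j}$ exponent, as the paper does. As written, the step you yourself flag as the crux is both unexecuted and, in the form you claim for its output, incorrect.
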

In each of these cases we prove an arbitrary precision asymptotic formula for the derivatives of the completed $L$-series at its central value.  We do this to show that these $L$-series are Hermite-Jensen, but these results are also of independent interest.
This paper is organized in the following way.  In Section 2 we will prove Theorem \ref{Main}.  In Section 3 we will prove the three corollaries.  This section will include the asymptotic formulae for the derivatives of the $L$-functions mentioned above.

\section*{Acknowledgements}
The author would like to thank Larry Rolen, Michael Griffin, and Ken Ono for helpful discussions related to this work.

\section{Asymptotics for $\Xi^{(n)}(0)$.}

Let $L(s) = \sum_{n \geq 1} a(n) n^{-s}$ be a good Dirichlet series.  We thus know that $L(s)$ has a completed form
\begin{align} \label{completed}
\begin{split}
\Lambda(s) &= N^{\frac{s}{2}} \prod_{j=1}^{J} \Gamma_{\R}(s) \prod_{m=1}^{M} \Gamma_{\C}(s) \cdot L(s) \\
&= N^{\frac{s}{2}} \int_{0}^{\infty} \left[ f(t) - f(\infty) \right] t^{s} \frac{dt}{t},
\end{split}
\end{align}
where $\Gamma_{\R}(s) :=\pi^{-\frac{s}{2}} \Gamma \left( \frac{s}{2} \right)$, $\Gamma_{\C}(s) := 2 (2 \pi)^{-s} \Gamma(s)$, and $\Gamma(s):= \int_{0}^{\infty} e^{-t} t^{s-1} dt$ is the usual gamma function.
Because of the transformation properties of $f(t)$, we split the integral at $\frac{1}{\sqrt{N}}$ to arrive at
\begin{equation}
\Lambda(s) = \frac{(\epsilon s-s+k) f(\infty)}{s(s-k)} + \int_{\frac{1}{\sqrt{N}}}^{\infty} \left(f(t) -f(\infty)\right) \left( \epsilon N^{\frac{k-s}{2}} t^{k-s}  +N^{\frac{s}{2}} t^{s}\right) \frac{dt}{t}.
\end{equation}
We have the following expression for the derivatives of $\Lambda(s)$: 
\begin{align*}
\Lambda^{(n)}(s) &= \frac{[(-1)^{n+1} (k-s)^{n+1} - \epsilon s^{n+1}]f(\infty) n!}{s^{n+1} (k-s)^{n+1}} \\
&+ \int_{\frac{1}{\sqrt{N}}}^{\infty} (f(t)-f(\infty)) \left(N^{\frac{s}{2}}t^{s} + (-1)^{n} \epsilon N^{\frac{k-s}{2}} t^{k-s} \right) \left(\frac{1}{2} \log(N) + \log(t) \right)^{n} \frac{dt}{t}.
\end{align*}
At $s=\frac{k}{2}$ and $z=0$ we have
\begin{equation}
\Lambda^{(n)} \left(\frac{k}{2} \right) = \frac{2^{n+1} f(\infty) n! ((-1)^{n+1} - \epsilon)}{k^{n+1}} + F(n)
\end{equation}
and 
\begin{equation}
\Xi^{(n)}(0) = \begin{cases} (-i)^{n} \frac{8 \binom{n}{2} F(n-2) - k^{2} F(n)}{4} & \text{if } \ \Lambda(s) \ \text{has a pole at} \ s=k \\
(-i)^{n}F(n) & \text{otherwise}, \end{cases}
\end{equation}
where
\begin{equation} \label{F}
F(n) = \frac{1}{2^n} \int_{\frac{1}{\sqrt{N}}}^{\infty} (f(t) - f(\infty)) N^{\frac{k}{4}} t^{\frac{k}{2} -1}(1+(-1)^{n} \epsilon) \left( \log(N) + 2 \log(t) \right)^{n} dt
\end{equation}
for all $n \geq 0$.  The large asymptotics of $\Lambda^{(n)} \left( \frac{k}{2} \right)$ and $\Xi^{(n)}(0)$ are obtained from the following theorem.
\begin{Thm} \label{Asy}
The function $F(n)$ defined in (\ref{F}) is given to all orders in $n$ by the asymptotic expansion
\begin{align}
F(n) &\sim \frac{\alpha(n_{0}) \sqrt{2 \pi} N^{\frac{k}{4}} (1 + (-1)^{n} \epsilon)}{2^n} \frac{L^{n+1}}{\sqrt{\left(1+ \frac{L}{2} \right)n - \left(\frac{k}{2} -1 \right)L^2}} \\
& \times e^{\frac{k}{4}(L-\log(N)) - \frac{2n}{L} -\frac{k}{2} +1} \left(1 + \frac{b_{1}}{n} + \frac{b_{2}}{n^2} + \cdots \right) \qquad (n \to \infty), \nonumber
\end{align}
where $L = L(n) \approx 2 \log \left(\frac{n \sqrt{N}}{\log(n \sqrt{N})} \right)$ is the unique positive solution of the equation $n = \frac{1}{2} \left( \pi n_{0} e^{\frac{L-\log(N)}{2}} -\frac{k}{2} +1 \right)L$ and each coefficient $b_{k}$ belongs to $\Q(L)$, the first value being $b_{1} = \frac{2(31L^4 + 189L^3 +542L^2 +744L +496)}{3(L+2)^3}$.
\end{Thm}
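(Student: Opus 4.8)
The plan is to view $F(n)$ as a Laplace-type integral with the large parameter $n$ sitting inside a logarithm, peel off the dominant exponential term, and run the saddle-point method to all orders.

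First I would simplify the integral \eqref{F}. Since $\log(N)+2\log(t)=2\log(\sqrt N\,t)$, the factor $2^{-n}$ in \eqref{F} is absorbed into $(\log(N)+2\log(t))^n=2^n\big(\log(\sqrt N t)\big)^n$, and inserting $f(t)-f(\infty)=\sum_{m\ge n_0}\alpha(m)e^{-\pi m t}$ gives
\begin{equation*}
F(n)=\big(1+(-1)^n\epsilon\big)\,N^{k/4}\sum_{m\ge n_0}\alpha(m)\int_{1/\sqrt N}^{\infty}e^{-\pi m t}\,t^{\frac k2-1}\,\big(\log(\sqrt N t)\big)^n\,dt .
\end{equation*}
If $n$ has parity opposite to $\epsilon$, the prefactor $1+(-1)^n\epsilon$ vanishes, $F(n)=0$, and the claimed expansion holds trivially; so assume $1+(-1)^n\epsilon=2$. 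Next I would show only the $m=n_0$ term matters to all orders: peeling off $\alpha(n_0)e^{-\pi n_0 t}$ and bounding $\big|\sum_{m>n_0}\alpha(m)e^{-\pi m t}\big|\ll_{\eta}e^{-\pi(n_0+1-\eta)t}$ on $t\ge 1/\sqrt N$ — using that the $\alpha(m)$ grow at most polynomially, being essentially the Dirichlet coefficients of $L(s)$ in its half-plane of absolute convergence — the remainder integral has the same shape but with the strictly larger mass $n_0+1-\eta$ in the exponent, and the saddle-point estimate below shrinks it, relative to the $m=n_0$ term, by a factor smaller than every power of $n$. Hence, to all orders, $F(n)=2\alpha(n_0)N^{k/4}I(n)$ with $I(n):=\int_{1/\sqrt N}^{\infty}e^{\Phi(t)}\,dt$, where $\Phi(t):=-\pi n_0 t+(\tfrac k2-1)\log t+n\log\log(\sqrt N t)$.

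Second I would carry out the saddle-point analysis of $I(n)$. Multiplying $\Phi'(t)=0$ by $t$ yields $n=(\pi n_0 t-\tfrac k2+1)\log(\sqrt N t)$, so setting $\tfrac L2:=\log(\sqrt N\,t^*)$ identifies the saddle $t^*=e^{(L-\log N)/2}$ with the unique positive root of $n=\tfrac12\big(\pi n_0 e^{(L-\log N)/2}-\tfrac k2+1\big)L$ from the statement, and a short iteration recovers the asymptotics of $L=L(n)$ stated there. Centering at the saddle (say $t=t^*e^{s}$) and expanding $\Phi(t^*e^s)$ in powers of $s$, the leading Gaussian integral $\sqrt{2\pi}\,\big(-\Phi''(t^*)\big)^{-1/2}e^{\Phi(t^*)}$ reproduces the asserted main term: once $\pi n_0 e^{(L-\log N)/2}$ is eliminated through the defining relation, $e^{\Phi(t^*)}$ supplies the $2^{-n}L^{n}$ and the exponential $e^{\frac k4(L-\log N)-\frac{2n}{L}-\frac k2+1}$, while $\big(-\Phi''(t^*)\big)^{-1/2}$ supplies the extra $L$ and the square-root denominator (the incidental factors $N^{\pm1/2}e^{\mp L/2}$ cancelling between the two), leaving the overall $\alpha(n_0)\sqrt{2\pi}\,N^{k/4}(1+(-1)^n\epsilon)$; the higher Gaussian moments, formed from $\Phi'''(t^*),\Phi^{(4)}(t^*),\dots$, furnish the correction series $1+b_1/n+b_2/n^2+\cdots$. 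Each $\Phi^{(j)}(t^*)$ is a rational function of $L$ once $n$ is removed via the defining relation — the denominators being powers of $\log(\sqrt N t^*)=\tfrac L2$ and of $-\Phi''(t^*)$, whose $n$-independent part is proportional to $L+2$, which accounts for the $(L+2)^3$ — so every $b_k\in\Q(L)$, and assembling the standard first Laplace correction from $\Phi'''(t^*)$ and $\Phi^{(4)}(t^*)$ gives the stated $b_1$.

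The main obstacle is uniformity. The saddle $t^*$ and all the $\Phi^{(j)}(t^*)$ drift with $n$: the effective width $\big(-\Phi''(t^*)\big)^{-1/2}$ tends to infinity, though the peak stays narrow relative to its location $t^*$, and $L\to\infty$ only logarithmically in $n$. So this is not a textbook fixed-saddle Laplace integral, and one must show that the rescaled integrand converges to $e^{-\xi^2/2}$ with an error controlled uniformly in $n$, so that both the term-by-term integration near $t^*$ and the discarding of the two tails (of the $t$-integral away from $t^*$, and of the sum over $m>n_0$) remain valid to all orders in $1/n$. This is precisely the technical heart of \cite{GORZ}, and once the reductions above have brought $F(n)$ into this one-variable form, the argument there transfers with only the evident substitution of the parameters $N$, $k$, $\epsilon$, $n_0$, $\alpha(n_0)$.
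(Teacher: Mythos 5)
Your proposal is correct and follows essentially the same route as the paper: both isolate the leading term $\alpha(n_0)e^{-\pi n_0 t}$ of $f(t)-f(\infty)$, locate the saddle via the same equation defining $L$, and apply the saddle-point method (your substitution $t=t^*e^s$ versus the paper's $t=(1+\lambda)a$ is only cosmetic), with the correction series $b_k\in\Q(L)$ arising from the higher moments and the uniformity issues deferred to \cite{GORZ} in both cases. Your tail bound justifying the discard of the $m>n_0$ terms is in fact slightly more explicit than the paper's one-line remark on the ratio of integrands near the saddle.
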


\begin{proof}[Proof of Theorem \ref{Asy}]
We approximate the integrand in (\ref{F}) by the function 
\begin{equation*}
g(t) = \alpha(n_{0})e^{- \pi n_{0} t} N^{\frac{k}{4}} (1 + (-1)^{n} \epsilon) t^{\frac{k}{2} -1} (\log(N) + 2\log(t))^{n}.
\end{equation*}
From now on we let $n$ be fixed and will omit it from our notations.  We have that $t \frac{d}{dt}(\log(g(t))) = \frac{2n}{\log(N) +2\log(t)} - \pi n_{0} t + \frac{k}{2} -1$, so $g(t)$ assumes its unique maximum at $t=a$ where $a$ is the solution in $\left(\frac{1}{\sqrt{N}}, \infty \right)$ of 
\begin{equation*}
n = \frac{1}{2} \left( \pi n_{0} a - \frac{k}{2} +1 \right)(\log(N) + 2\log(a)).
\end{equation*}
For convenience, we define $L = \log(N) + 2 \log(a)$ so we have 
\begin{equation*}
n = \frac{1}{2} \left( \pi n_{0} e^{\frac{L-\log(N)}{2}} - \frac{k}{2} +1 \right)L.
\end{equation*}
We can then use Lambert's $W$ function to asymptotically solve this equation.  Lambert's $W$ function is defined as the solution to $z= W(z)e^{W(z)}$.  It has the nice property that $Y=Xe^{X}$ if and only if $X=W(Y)$.  If we take a branch cut to restrict $W$ to be real valued, then we have that the principal branch has a Taylor series around $0$ given by $W(x) = \sum_{n=1}^{\infty} \frac{(-n)^{n-1}}{n!} x^{n}$.  For large $x$, $W(x)$ is asymptotic to $W(x) = \ln(x) - \ln \ln(x) + \frac{\ln \ln(x) (\ln \ln(x) -2)}{\ln^{2}(x)} + O \left(\left(\frac{\ln \ln(x)}{\ln(x)} \right)^{3} \right)$ \cite{CGHJK}.  Therefore, we have $L \approx 2 \log \left( \frac{n \sqrt{N}}{\log(n \sqrt{N})} \right)$.  We now follow \cite{GORZ} and apply the saddle point method.  The Taylor expansion of $g(t)$ around $t=a$ is given by
\begin{equation*}
\frac{g((1+\lambda)a)}{g(a)} = \left( 1 + \frac{2 \log(1+\lambda)}{\log(N) + 2 \log(a)} \right)^{n} (1 + \lambda)^{\frac{k}{2} -1} e^{- \pi n_{0} \lambda a} =e^{-\frac{C \lambda^{2}}{2}} \left(1 + A_{3} \lambda^3 + A_{4} \lambda^{4} + \cdots \right),
\end{equation*}
where $C = n \left( \frac{\varepsilon}{2} + \varepsilon^{2} \right) + \frac{k}{8} -\frac{1}{4}$, $\varepsilon = \frac{1}{\log(N) + 2 \log(a)} = L^{-1}$, and the $A_{i}$ are polynomials of degree $\lfloor i/3 \rfloor$ in $n$ with coefficients in $\Q[\varepsilon]$.  This expansion was found by expanding $\log(g((1 + \lambda)a)) - \log(g(a))$ in $\lambda$.  The linear term vanises by choice of $a$ and the quadratic term is $-\frac{C \lambda^2}{2}$.  The coefficients of the higher powers of $\lambda$ are all linear expressions in $n$ with coefficients in $\Q[\varepsilon]$.  Exponentiating this expansions gives our expression for $g((1+\lambda)a)/g(a)$.  The important behavior is that the dominant term of each $A_{i}$ comes primarily from the exponential of the cubic term of the logarithmic expansion.  The first few $A_{i}$ are 
\begin{align*}
A_{3} &= 2n \left( \frac{\varepsilon}{3} + \varepsilon^{2} + \frac{4 \varepsilon^{3}}{3} \right) + \frac{k}{6} -\frac{1}{3}, \\
A_{4} &=- n \left( \frac{\varepsilon}{2} + \frac{11 \varepsilon^2}{6} + 4 \varepsilon^3 + 4 \varepsilon^4 \right) - \frac{k}{8} + \frac{1}{4}, \\
A_{5} &= n \left( \frac{2 \varepsilon}{5} + \frac{5 \varepsilon^2}{3} + \frac{14 \varepsilon^3}{3} + 8 \varepsilon^4 + \frac{32 \varepsilon^5}{5} \right) + \frac{k}{10} - \frac{1}{5}, \\
A_{6} &= n^{2} \left( \frac{2 \varepsilon^2}{9} + \frac{4 \varepsilon^3}{3} + \frac{34 \varepsilon^4}{9} + \frac{16 \varepsilon^5}{3} + \frac{32 \varepsilon^6}{9} \right) + \frac{k^2 -7k +10}{36} \\
&+ n \left( \frac{(10k-50) \varepsilon}{90} + \frac{(30k - 197) \varepsilon^2}{90} + \frac{(40k-530) \varepsilon^3}{90} - \frac{34 \varepsilon^4}{3}  - 16 \varepsilon^5  - \frac{32 \varepsilon^6}{3} \right). 
\end{align*}
We plug in $t = (1 + \lambda)a$ to arrive at the asymptotic expansion
\begin{align*}
\frac{1}{2^n} \int_{\frac{1}{\sqrt{N}}}^{\infty} g(t) dt &= \frac{a g(a)}{2^n} \int_{-1 +\frac{1}{a \sqrt{N}}}^{\infty} e^{-\frac{C \lambda^2}{2}} \left(1 + A_{3} \lambda^3 + A_{4} \lambda^4 + \cdots \right) d \lambda \\
&= \frac{a g(a)}{2^n} \sqrt{ \frac{2 \pi}{C}} \left( 1 + \frac{3A_{4}}{C^2} + \frac{15A_{6}}{C^3} + \cdots + \frac{(2i-1)!! A_{2i}}{C^i} +\cdots \right).
\end{align*}
This expression and the one in Theorem \ref{Asy} are interpreted as asymptotic expansions.  These series do not converge for a fixed $n$, but we can truncate the approximation at $O(n^{-A})$ for some $A>0$, and as $n \to +\infty$ this approximation becomes true to the given precision.  We substitute the formulas for $C$ and $A_{i}$ in terms of $n$ in order to obtain the statement in the theorem.  We also replace $F(n)$ by the integral over $g(t)$ with only the $A_{2i}$ with $i \leq 3k$ contributing to $b_{k}$.  The same asymptotic formula will hold with this replacement because the ratio of $g(t)$ and the integrand of $F(n)$ is equal to $1 + O(n^{-K})$ for any $K>0$ for $t$ near $a$.

\end{proof}
\section{Proof of Theorem \ref{Main}}
Our goal is to show that $\{\gamma(n) \}$ satisfies the growth conditions of Definition \ref{growth}. Recall from Section 1 that 
\begin{equation}
\gamma(n) = \begin{cases} (-1)^{n} \frac{n!}{(2n)!} \Xi^{(2n)}(0) & \text{if } \ \epsilon =1 \\
i^{2n+1} \frac{n!}{(2n+1)!} \Xi^{(2n+1)}(0) & \text{if } \ \epsilon=-1, \end{cases}
\end{equation}
where $\gamma(n)$ are the Taylor coefficients of $\Xi_{1}(x)$.  Therefore, if we have
\begin{equation} \label{Fhat}
\widehat{F}(n) = \frac{\alpha(n_{0}) \sqrt{2 \pi} N^{\frac{k}{4}} (1 + (-1)^{n} \epsilon) L^{n+1}}{2^{n} \sqrt{\left(1 + \frac{L}{2} \right)n - \left( \frac{k}{2} -1 \right)L^2}}  e^{\frac{k}{4} \left( L-\log(N) \right) - \frac{2n}{L} - \frac{k}{2} +1} \left(1 + \frac{b_{1}}{n} \right)
\end{equation}
and
\begin{equation*}
\widehat{\Xi}^{(n)}(0) = \left\{\!\begin{aligned}
&(-i)^{n} 2 \binom{n}{2} \widehat{F}(n-2) & \text{if } \ \Lambda(s) \ \text{has a pole} \\
&(-i)^{n} \widehat{F}(n) & \text{otherwise}
\end{aligned}\right\} = \Xi^{(n)}(0) \cdot \left( 1 + O \left(\frac{1}{n^{2-\varepsilon}} \right) \right)
\end{equation*}
as in the example above, then 
\begin{equation}
\widehat{\gamma}(n) = \left\{\!\begin{aligned}
&\frac{n!}{(2n-2)!} \widehat{F}(2n-2) & \text{if } \ \Lambda(s) \text{ has a pole and } \ \epsilon =1 \\
&\frac{n!}{(2n-1)!} \widehat{F}(2n-1) & \text{if } \ \Lambda(s) \ \text{ has a pole and } \ \epsilon =-1 \\
&\frac{n!}{(2n)!} \widehat{F}(2n) & \text{if } \Lambda(s) \ \text{ does not have a pole and } \epsilon=1 \\
&\frac{n!}{(2n+1)!} \widehat{F}(2n+1) & \text{if } \Lambda(s) \ \text{ does not have a pole and } \epsilon=-1
\end{aligned}\right\} = \gamma(n) \cdot \left(1 + O \left(\frac{1}{n^{2-\varepsilon}} \right) \right).
\end{equation}
We will show that the $\gamma(n) = \frac{n!}{m!} \widehat{F}(m)\cdot \left( 1 + O \left( \frac{1}{n^{2-\varepsilon}} \right) \right)$ for $m= 2n-2, 2n-1, 2n$, or $2n+1$ form a Hermite-Jensen sequence.  Recall that
\begin{equation*}
b_{1} = \frac{2(31L^4 + 189L^3 + 542L^2 +744L+496)}{3(L+2)^3}.
\end{equation*}
Using Stirling's approximation $r! = \sqrt{2 \pi r} \left( \frac{r}{e} \right)^{r} \cdot \left( 1 + \frac{1}{12r} \right) \cdot \left(1 + O(1/r^2) \right)$, we have
\begin{align} \label{gamma}
\gamma(n) &= \frac{\alpha(n_{0}) N^{\frac{k}{4}} ( 1+ (-1)^m \epsilon) e^{m-n} n^{n+\frac{1}{2}} \left(1 + \frac{1}{12n} \right) L(m)^{m}}{2^{m} m^{m+\frac{1}{2}} \left(1 + \frac{1}{12m} \right)} \cdot \sqrt{\frac{2 \pi}{C(m)}} \\
& \times {\rm{exp}} \left( \frac{k}{4}(L(m) - \log(N)) - \frac{2m}{L(m)} -\frac{k}{2} +1 \right) \left(1 + \frac{b_{1}(m)}{m} \right) \cdot \left(1 +O \left(\frac{1}{n^{2 - \varepsilon}} \right) \right). \nonumber
\end{align}
Recall that $L(m)$ and $b_{1}(m)$ are given in Theorem \ref{Asy} and $C(m) = m \left(\frac{L(m)^{-1}}{2} + L(m)^{-2} \right) + \frac{k}{8}- \frac{1}{4}$.  $L(m)$ can be viewed as a holomorphic and non-vanishing function for ${\rm{Re}}(m)>0$, so we have a Taylor expansion in $j$ for the ratio $L(m+2j)/L(m)$ given by
\begin{equation}
\mathcal{L}(j;m):= \frac{L(m+2j)}{L(m)} = 1 + \sum_{r \geq 1} \frac{\ell_{r}(m)}{r!} \cdot j^{r}
\end{equation} 
which converges when $|j|< \frac{m}{2}$, so we will assume this throughout the proof.  If we let $J = \frac{\lambda m}{2}$ for some $-1<\lambda<1$, then the asymptotic $L(m) \approx \log \left( \frac{\sqrt{N} m}{\log(\sqrt{N}m)} \right)$ gives the limit
\begin{equation*}
\lim_{m \to \infty} \mathcal{L}(J;m) = \lim_{m \to \infty} \frac{L(m(\lambda +1))}{L(m)} =1.
\end{equation*}   
This implies that $\ell_{r}(m) = o \left( \left( \frac{2}{m} \right)^r \right)$.  If we expand 
\begin{equation*}
m +2j = \frac{L(m) \cdot \mathcal{L}(j;m)}{2} \left( \pi n_{0} e^{\frac{L(m) \cdot \mathcal{L}(j;m) - \log(N)}{2}} - \frac{k}{2} +1 \right)
\end{equation*}
in $j$ then we find $\ell_{1}(m) = \frac{8}{4m(L/2 +1) + L^{2}(k/2 -1)} = \frac{2}{C \cdot L^2}$ and $\ell_{2}(m) = \frac{-(L/2 +2)(m + kL/4 - L/2)}{C^{3} \cdot L^{5}}$, where $L=L(m)$ and $C=C(m)$.  We will also define
\begin{equation*}
\mathcal{C}(j;m) := \frac{C(m+2j)}{C(m)} = 1 + \sum_{r \geq 1} \frac{c_{r}(m)}{r!} \cdot j^{r},
\end{equation*}
and 
\begin{equation*}
\mathcal{B}(j;m) := \frac{1 + \frac{b_{1}(m+2j)}{m+2j}}{1 + \frac{b_{1}(m)}{m}} = 1 + \sum_{r \geq 1} \frac{\beta_{r}(m)}{r!} \cdot j^{r}.
\end{equation*}
We have the limits 
\begin{equation*}
\lim_{m \to \infty} \mathcal{C}(J;m) = 1 + \lambda \ \text{ and } \ \lim_{m \to \infty} \frac{m}{2} \left( \mathcal{B}(J;m) -1 \right) =0,
\end{equation*}
which imply $c_{r}(m) = o \left( \left(\frac{2}{m} \right)^r \right)$ and $\beta_{r}(m) = o \left( \left(\frac{2}{m} \right)^{r+1} \right)$.  Using the expansion for $\mathcal{L}(j;m)$ and the expression for $\ell_{1}(m)$ we can find that $c_{1}(m) = \frac{L+2}{C \cdot L^2} - \frac{m(L+4)}{C^2 \cdot L^4}$.  
Define $R_{\gamma}(j;m) := \frac{\widehat{\gamma}(n+j)}{\widehat{\gamma}(n)}$, then after some manipulations we have
\newpage
\begin{align*}
R_{\gamma}(j;m) &= \frac{e^{j} n^{j} L(m)^{2j}}{2^{2j} m^{2j}} \cdot \frac{ \left( \frac{n+j}{n} \right)^{n+j +\frac{1}{2}}}{ \left( \frac{m+2j}{m} \right)^{m+2j+\frac{1}{2}}} \frac{\left(1 + \frac{1}{12(n+j)} \right) \left( 1 + \frac{1}{12m} \right)}{\left(1 +\frac{1}{12n} \right) \left(1 +\frac{1}{12(m+2j)} \right)} \cdot \frac{\mathcal{L}(j;m)^{m+2j}}{\sqrt{\mathcal{C}(j;m)}} \\
& \times {\rm{exp}} \left( \frac{kL(m)}{4}(\mathcal{L}(j;m) -1) - \frac{2(m+2j)}{\mathcal{L}(j;m) L(m)} + \frac{2m}{L(m)} \right) \cdot \mathcal{B}(j;m).
\end{align*}
By equation (\ref{gamma}), for $j$ fixed and as $m \to \infty$ (and thus $n \to \infty$), we have
\begin{equation}
\frac{\gamma(n+j)}{\gamma(n)} = R_{\gamma}(j;m) \cdot \left(1 + O \left(\frac{1}{n^{2 - \varepsilon}} \right) \right).
\end{equation}
Notice that the first factor in $R(j;m)$ is the $j$th power of $ \frac{enL(m)^2}{2^2 m^2}$.  This factor will essentially be $e^{A(n)}$.  We will now look at the expansion
\begin{equation*}
\log R(j;m) =: \sum_{r \geq 1} g_{r}(m) j^{r}.
\end{equation*}
We again let $J=\frac{m \lambda}{2}$ for $-1<\lambda<1$, then we have
\begin{equation*}
\lim_{m \to \infty} \frac{2 \log R(J;m)}{m} = \sum_{r \geq 1} g_{r}(m) \left(\frac{m}{2} \right)^{r-1} \lambda^{r} =-(\lambda +1) \log(\lambda +1),
\end{equation*}
which tells us that $g_{r}(m) = O \left( \left(\frac{m}{2} \right)^{1-r} \right)$.  We can use our previous expansions and the formula for $R_{\gamma}(j;m)$ to find 
\begin{align*}
g_{1}(m) &= \log \left(\frac{n L^{2}}{4 m^2} \right) +m \cdot \ell_{1}(m) \left(\frac{L+2}{L} \right) - \frac{4}{L} +\frac{k \cdot \ell_{1}(m) \cdot L}{4} - \frac{c_{1}(m)}{2} + O \left( \frac{1}{n^{2 - \varepsilon}} \right), \\
g_{2}(m) &= -\frac{2}{m} + (4 \ell_{1}(m) + m \cdot \ell_{2}(m))\left(\frac{L+2}{2L} \right) -m \cdot \ell_{1}(m)^{2} \left(\frac{L+4}{2L} \right) + O \left( \frac{1}{n^{2 - \varepsilon}} \right).
\end{align*}
We can use the formulas for $\ell_{1}(m)$ and $\ell_{2}(m)$ to simplify these to
\begin{align}
g_{1}(m) &= \log \left( \frac{n L^2}{4 m^2} \right) + \frac{L -2}{2C \cdot L^2} + \frac{m(L+4)}{2C^2 \cdot L^4}  + O \left( \frac{1}{n^{2 - \varepsilon}} \right), \\
g_{2}(m) &= -\frac{2}{m} + \frac{4}{C \cdot L^{2}} + O \left( \frac{1}{n^{2 - \varepsilon}} \right). 
\end{align}
We now let 
\begin{align}
A(n) &= \log \left( \frac{n L^2}{4 m^2} \right)+ \frac{L -2}{2C \cdot L^2} + \frac{m(L+4)}{2C^2 \cdot L^4} \\
\delta(n) &= \sqrt{\frac{2}{m} - \frac{4}{C \cdot L^2}}.
\end{align}
These functions satisfy the conditions of Definition \ref{growth} for the sequence $\{ \gamma(n) \}$.  The fact that $\delta(n) \to 0$ follows from the asymptotics given above and the precision of $O \left( \frac{1}{n^{2-\varepsilon}} \right)$ satisfies the necessary growth conditions given in Definition \ref{growth}.  
\newpage
\section{Proofs of Corollaries}

\subsection{Dirichlet $L$-functions}
\subsubsection{Proof of Corollary \ref{Dir}}
 Let $\chi$ be a Dirichlet character modulo $N>1$.  Then we define the Dirichlet L-function as 
\begin{equation} \label{L}
L(\chi, s) := \sum_{n \geq 1} \frac{\chi(n)}{n^s}
\end{equation}
for ${\rm{Re}}(s) >1$.  If we let $\chi$ be the trivial character, then our $L$-function is the Riemann zeta function.  This case was handled in \cite{GORZ}.
Next, recall the twisted theta function
\begin{align} \label{theta}
\theta_{\chi}(z)= \chi(0) + 2\sum_{n \geq 1} \chi(n) n^{\nu} e^{\pi i n^{2}z}
\end{align}
where $\nu=0$ if $\chi$ is even and $\nu=1$ if $\chi$ is odd.  The twisted theta function satisfies the functional equation
\begin{equation} \label{thetafun}
\theta_{\chi}(z) = \frac{\tau(\chi)}{i^{\nu} \sqrt{N} (-iNz)^{\frac{1}{2} +\nu}} \theta_{\bar{\chi}} \left(\frac{1}{N^{2}z} \right)
\end{equation}
where $\tau(\chi)$ is a Gauss sum and $\bar{\chi}$ is the dual character.  We will focus on real primitive self-dual characters so we have $\chi = \bar{\chi}$ and $\tau(\chi) = i^{\nu} \sqrt{N}$. Define the completed Dirichlet $L$-function by 
\begin{align}
\Lambda(\chi, s) &:= \left( \frac{N}{\pi} \right)^{\frac{s + \nu}{2}} \Gamma \left( \frac{s + \nu}{2} \right) L(\chi, s) \\
&= \frac{1}{2} N^{\frac{s + \nu}{2}} \int_{0}^{\infty}  \theta_{\chi}(iy) y^{\frac{s+\nu}{2}} \frac{dy}{y}. \nonumber
\end{align}    
Using equation \eqref{thetafun} and the fact that $\chi$ is a real primitive self-dual character, we have the following functional equation
\begin{equation}
\Lambda(\chi, s) = \Lambda(\chi, 1-s).
\end{equation} 
The completed Dirichlet $L$-function has the required integral representation, functional equation, and real coefficients so it is good.

\subsubsection{Derivatives at central values and Dirichlet Jensen polynomials}
We want to study the derivatives of $\Lambda(\chi, s)$ which are given by
\begin{equation}
\Lambda^{(n)} \left(\chi, s \right) = \frac{1}{2^{2n+1}} \int_{\frac{1}{N}}^{\infty} \theta_{\chi}(iy) \left( (Ny)^{\frac{s+\nu}{2}} + (-1)^{n} (Ny)^{\frac{1-s+\nu}{2}} \right) \left(\log(N^2) +2 \log(y) \right)^{n} \frac{dy}{y}.
\end{equation}
At the central value $s= \frac{1}{2}$ we have
\begin{equation}
\Lambda^{(n)} \left( \chi, \frac{1}{2} \right) = \frac{1}{2^{2n +1}} \int_{\frac{1}{N}}^{\infty} \theta_{\chi}(iy) (Ny)^{\frac{1}{4} + \frac{\nu}{2}} (1+(-1)^{n}) (\log(N^2) + 2\log(y))^{n} \frac{dy}{y}.
\end{equation}
Because the Dirichlet $L$-functions fit into our framework we have the following theorem which gives an arbitrary precision asymptotic formula for these derivatives.
\begin{Thm}
Assume the notation above.  The large $n$ asymptotics for $\Lambda^{(n)} \left( \chi, \frac{1}{2} \right)$ and $\Xi^{(n)}(\chi, 0)$ are given to all orders by the asymptotic expansion
\begin{align}
F(n)&\sim \frac{\sqrt{2 \pi} N^{\frac{1}{4} + \frac{\nu}{2}} (1 +(-1)^n)}{2^{2n }} \frac{L^{n+1}}{\sqrt{4n \left(1 +\frac{L}{2} \right) - \left(\frac{3}{4} - \frac{\nu}{2} \right)L^{2}}} \\
\times& e^{\left( \frac{1}{8} + \frac{\nu}{4} \right)(L - \log(N^2)) -\frac{2n}{L} +\frac{3}{4} -\frac{\nu}{2}} \left(1 + \frac{b_{1}}{n} + \frac{b_{2}}{n^2} + \cdots \right) \qquad (n \to \infty), \nonumber
\end{align}
where $L=L(n) \approx 2\log \left(\frac{nN}{\log(nN)} \right)$ is the unique positive solution to $n = \frac{1}{2} \left( \pi e^{\frac{L-\log(N^2)}{2}} +\frac{3}{4} - \frac{\nu}{2} \right)L$ and each coefficient $b_{k}$ belongs to $\Q(L)$, the first value being $b_{1} = \frac{L^{4} +9L^{3} +32L^{2} +24L +16}{24(L+2)^{3}}$.
\end{Thm}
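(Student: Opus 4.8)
The plan is to read this theorem as the specialization of the Section~2 machinery to the Dirichlet case, and then to rerun the saddle-point computation of Theorem~\ref{Asy} with the appropriate parameters. Concretely, the integral representation for $\Lambda^{(n)}(\chi,s)$ displayed just above, evaluated at the central point $s=\tfrac12$, puts $F(n)=\Lambda^{(n)}(\chi,\tfrac12)$ in the same shape as the quantity $F(n)$ in \eqref{F} after the substitutions: the conductor $N$ of Section~2 becomes $N^2$ (the integral is cut at $1/N=1/\sqrt{N^2}$ and the logarithms read $\log(N^2)+2\log y$); the exponent $k$ becomes $\tfrac12+\nu$ (so that $t^{k/2-1}$ matches $y^{1/4+\nu/2-1}$ and the functional-equation symmetry $s\mapsto 1-s$ is reproduced); the sign is $\epsilon=1$; and the theta series $\theta_\chi(iy)=2\sum_{n\ge1}\chi(n)n^\nu e^{-\pi n^2y}$ has $f(\infty)=\chi(0)=0$ (since $N>1$), so the non-integral term in the formula for $\Lambda^{(n)}(k/2)$ drops out, while its leading term is $2e^{-\pi y}$ because $\chi(1)=1$, so that $n_0=1$ and $\alpha(n_0)=1$. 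The one extra bookkeeping point is the chain rule $\tfrac{d}{ds}=2\tfrac{d}{ds'}$ relating the Section~2 variable $s$ to the Dirichlet variable $s'=2s-\nu$; this accounts for the additional $2^{-n}$ that turns the Section~2 prefactor $2^{-n}$ into the prefactor $2^{-2n}$ appearing here.

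First I would locate the unique maximum of the model integrand $e^{-\pi y}\,y^{\nu/2-3/4}(\log(N^2)+2\log y)^n$ at $y=a$, which yields the transcendental equation $n=\tfrac12\bigl(\pi e^{(L-\log N^2)/2}+\tfrac34-\tfrac\nu2\bigr)L$ with $L=\log(N^2)+2\log a$; solving it asymptotically via Lambert's $W$ function exactly as in the proof of Theorem~\ref{Asy} gives $L=L(n)\approx 2\log\!\bigl(\tfrac{nN}{\log(nN)}\bigr)$. Next I would expand $\log(\text{integrand})$ around $y=(1+\lambda)a$: the linear term vanishes by the choice of $a$, the quadratic term is $-\tfrac12C\lambda^2$ with $C=C(n)$ linear in $n$ over $\Q[L^{-1}]$, and the cubic and higher terms produce the polynomials $A_i$ in $n$. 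Exponentiating and integrating the Gaussian, $\int e^{-C\lambda^2/2}(1+A_3\lambda^3+A_4\lambda^4+\cdots)\,d\lambda=\sqrt{2\pi/C}\,(1+3A_4/C^2+15A_6/C^3+\cdots)$, the factor $a\sqrt{2\pi/C}$ combined with $e^{\log(\text{integrand at }a)}$ reproduces precisely $\sqrt{2\pi}\,N^{1/4+\nu/2}L^{n+1}\big/\sqrt{4n(1+L/2)-(\tfrac34-\tfrac\nu2)L^2}$ times $e^{(\frac18+\frac\nu4)(L-\log N^2)-2n/L+\frac34-\frac\nu2}$, and the bracket $(1+3A_4/C^2+15A_6/C^3+\cdots)$ furnishes the asymptotic series $1+b_1/n+b_2/n^2+\cdots$ with each $b_k\in\Q(L)$ (only the $A_{2i}$ with $i\le 3k$ contributing to $b_k$).

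Extracting $b_1$ is where the real labor lies: one must carry the $\lambda^3,\lambda^4,\lambda^6$ coefficients of the logarithmic expansion to sufficient precision in $L^{-1}$, form $3A_4/C^2+15A_6/C^3$, and collect the $O(1/n)$ term, tracking the $\nu$-dependent constants carefully; this should collapse to $b_1=\dfrac{L^4+9L^3+32L^2+24L+16}{24(L+2)^3}$. This is routine but error-prone, and it is the main obstacle only in that sense: there is no conceptual difficulty, since Theorem~\ref{Asy} already supplies both the method and its justification (the ratio of the true integrand to the model $2e^{-\pi y}y^{\nu/2-3/4}(\cdots)^n$ is $1+O(n^{-K})$ for every $K$ near $y=a$ because the higher theta terms $e^{-\pi n^2 y}$, $n\ge 2$, are relatively exponentially small there, and the contribution near the cutoff $y=1/N$ is likewise negligible, so truncating the expansion at $O(n^{-A})$ is legitimate). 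Finally, since $L(\chi,s)$ is entire for nontrivial $\chi$, there is no pole term, so $\Xi^{(n)}(\chi,0)=(-i)^n\Lambda^{(n)}(\chi,\tfrac12)$, and the stated asymptotic formula for the $\Xi$-derivatives follows at once by multiplying by the unimodular factor $(-i)^n$ (equivalently, by specializing the ``otherwise'' branches of the formulas in Section~2).
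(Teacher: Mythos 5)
Your proposal matches the paper's treatment: the paper offers no separate argument for this theorem beyond establishing the integral representation and functional equation and then invoking Theorem \ref{Asy} ("the Dirichlet $L$-functions fit into our framework"), which is exactly the specialization you carry out, with the correct parameter dictionary ($N\mapsto N^2$, $k=\tfrac12+\nu$, $\epsilon=1$, $f(\infty)=0$, leading theta term $2e^{-\pi y}$) and the same saddle-point computation. The approach and its justification are the same as the paper's; your extra care about the $2^{-2n}$ normalization and the negligibility of the higher theta terms only makes the specialization more explicit.
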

\begin{Ex}
Let $\chi_{4}$ be the odd Dirichlet character of modulus $4$.  Using the two-term approximation $\widehat{F}(n)$ given in equation (\ref{Fhat}) we give some approximations $\widehat{\gamma}_{\chi_{4}}(n)$ in the table below.

\begin{center}
  \begin{tabular}{ | c | c | c | c | }
    \hline
    $n$ & $\widehat{\gamma}_{\chi_{4}}(n)$ & $\gamma_{\chi_{4}}(n)$ & $\gamma_{\chi_{4}}(n)/\widehat{\gamma}_{\chi_{4}}(n)$ \\ \hline
    $10$ & $\approx 8.6123842782 \times 10^{-14}$ & $\approx 8.5921206983 \times 10^{-14}$ & $\approx 0.997647158$  \\ 
    $100$ & $\approx 1.0054943805 \times 10^{-174}$ & $\approx 1.0057597216 \times 10^{-174}$ & $\approx 0.9997361785$ \\ 
    $1000$ & $\approx 1.7838444188 \times 10^{-2350}$ & $\approx 1.7838866878 \times 10^{-2350}$ & $\approx 0.9999763051$ \\ 
    $10000$ & $\approx 1.7271165350 \times 10^{-30650}$ & $\approx 1.7271200653 \times 10^{-30650}$ & $\approx 0.9999979560$ \\ 
    $100000$ & $\approx 8.1291521235 \times 10^{-384416}$ & $\approx 8.1291531304 \times 10^{-384416}$ & $\approx 0.9999998761$ \\ 
   
    \hline
  \end{tabular}
\end{center}
\end{Ex}

In the previous section we showed that the Dirichlet $L$-function $L(\chi, s)$ is good.  Dirichlet $L$-functions have a pole at $s=1$ if $\chi$ is principal so we define
\begin{equation}
\Xi(\chi, z) :=\begin{cases} \left(-z^{2} - \frac{1}{4} \right) \Lambda \left(\frac{1}{2} -iz \right) & \text{if } \ \chi \ \text{ is principal} \\
\Lambda \left( \chi, \frac{1}{2} -iz \right) & \text{else}
\end{cases}
\end{equation}
and 
\begin{equation}
\Xi_{1}(\chi, x) := \Xi(\chi, i \sqrt{x}) = \sum_{n \geq 0} \frac{\gamma_{\chi}(n)}{n!} x^{n}
\end{equation}
where
\begin{equation}
\gamma_{\chi}(n) = (-1)^{n} \frac{n!}{(2n)!} \cdot \Xi^{(2n)}(\chi, 0).
\end{equation}
By Theorem \ref{Main} or by using the asymptotic expansion above we know that if $d \geq 1$, then $J_{\gamma_{\chi}}^{d,n}(X)$ is hyperbolic with at most finitely many exceptions $n$.
\begin{ex}
To exemplify Corollary \ref{Dir} we will again consider the odd Dirichlet character of modulus $4$.  Let $L(n)$ be the unique solution to $n = \frac{L(n)}{2} \left(\pi e^{\frac{L(n) - \log(16)}{2}} + \frac{1}{4} \right)$ and set $L=L(2n)$.  Also set
\begin{align*}
C&=C(2n) = 2n \left(\frac{2}{L} + \frac{4}{L^2} \right) -\frac{1}{4}, \\
A(n) &= \log \left( \frac{L^2}{64n} \right) + \frac{2(L-2)}{CL^2} + \frac{16n(L+4)}{C^2 L^4}, \\
\delta(n) &= \sqrt{\frac{1}{2n} - \frac{8}{CL^2}}.
\end{align*}
The following table demonstrates for $d=2$ and $d=3$ that $\widehat{J}^{d,n}_{\gamma_{\chi_{4}}}(X) = J^{d,n}_{\gamma_{\chi_{4}}} \left( \frac{\delta(n)X -1}{e^{A(n)}} \right)$ converges to $H_{d}(X)$ as $n \to \infty$.  The polynomials have been normalized so that their leading coefficients are $1$.

\begin{center}
  \begin{tabular}{ | c | c | c | }
    \hline
    $n$ & $\widehat{J}^{2,n}_{\chi_{4}}(X)$ & $\widehat{J}^{3,n}_{\gamma_{\chi_{4}}}(X)$ \\ \hline
    $100$ & $\approx X^2 + 0.3332X -1.9985$ & $\approx X^3 + 0.8306X^2 -5.8678X -1.3254$ \\ 
    $1000$ & $\approx X^2 + 0.1136X - 1.9997$ & $\approx X^3 + 0.2839X^2 -5.9847X - 0.4414$ \\ 
    $10000$ & $\approx X^2 + 0.0375X - 1.9999$ & $\approx X^3 +0.0936X^2 -5.9984X -0.1435$ \\ 
    $100000$ & $\approx X^2 + 0.0012X -1.9999$ & $\approx X^3 +0.0304X^2 -5.9998X -0.0444$ \\ 
    $\vdots$ & $\vdots$ & $\vdots$ \\
    $\infty$ & $H_{2}(X) = X^2 -2$ & $H_{3}(X) = X^3 -6X$. \\
    
    \hline
  \end{tabular}
\end{center}

\end{ex}

\subsection{Modular $L$-functions}
\subsubsection{Proof of Corollary \ref{Mod}}
 Let $f \in S_{k} \left(\Gamma_{0}(N)\right)$ be an even weight newform with real coefficients and write $f(z) = \sum_{n \geq 1} a(n) e^{2 \pi i n z}$.  Assume that $f$ is normalized so that $a(1)=1$. We focus newforms with trivial character.  Define the L-function associated to $f$ by
\begin{equation} \label{modular}
L(f, s) := \sum_{n \geq 1} \frac{a(n)}{n^{s}}
\end{equation}
for ${\rm{Re}}(s) >1 + \frac{k}{2}$.
Define the completed modular $L$-function by 
\begin{equation}
\Lambda(f, s) := N^{\frac{s}{2}} (2 \pi)^{-s} \Gamma(s) L(f, s).
\end{equation}
We have the transformation property
\begin{equation}
f \left( \frac{i}{Ny} \right) = i^{k} \epsilon_{f} N^{\frac{k}{2}} y^{k} f(iy),
\end{equation}
which gives rise to the functional equation 
\begin{equation} \label{Mfunctional}
\Lambda(f, s) = i^{k} \epsilon_{f} \Lambda(f, k-s),
\end{equation}
where $\epsilon_{f} \in \{ \pm 1 \}$ is the eigenvalue of $f$ under the Atkin-Lehner involution.  The completed modular $L$-function $\Lambda(f,s)$ has the required integral representation, the modular properties of $f(z)$ gives a functional equation, and the coefficients are real so $L(f,s)$ is good.

\subsubsection{Derivatives at central values and modular Jensen polynomials}
Similarly to the Dirichlet $L$-function case, the $n$th derivative takes the form
\begin{equation} \label{Mderiv}
\Lambda^{(n)}(f,s)= \frac{1}{2^n} \int_{\frac{1}{\sqrt{N}}}^{\infty} f(iy) \left( N^{\frac{s}{2}} y^{s} + (-1)^{n} i^{k} \epsilon_{f} N^{\frac{k-s}{2}} y^{k-s}  \right) \left( \ln(N) + 2 \ln(y) \right)^{n} \frac{dy}{y}.
\end{equation}
At the central value $s = \frac{k}{2}$ we have
\begin{equation}
\Lambda^{(n)} \left(f, \frac{k}{2} \right) = \frac{1}{2^{n}} \int_{\frac{1}{\sqrt{N}}}^{\infty} f(iy) N^{\frac{k}{4}} y^{\frac{k}{2} -1} (1 + (-1)^{n} i^{k} \epsilon_{f}) \left( \ln(N) + 2 \ln(y) \right)^{n} dy.
\end{equation}
The following theorem gives an arbitrary precision asymptotic formula for these derivatives at central values.
\begin{Thm}
Assume the notation above.  Large $n$ asymptotics for $\Lambda^{(n)} \left(f, \frac{k}{2} \right)$ and $\Xi^{(n)}(f,0)$ is given to all orders by the asymptotic expansion
\begin{align}
F(n) &\sim \frac{\sqrt{2 \pi} N^{\frac{k}{4}} (1 + (-1)^{n} i^{k} \epsilon_{f})}{2^{n+1}} \frac{L^{n+1}}{\sqrt{ \left(1 + \frac{L}{2} \right)n - \left( \frac{k}{2} -1 \right)L^{2}}} \\
&\times e^{\frac{k}{4}(L - \log(N)) - \frac{2n}{L} - \frac{k}{2} +1} \left(1 + \frac{b_{1}}{n} + \frac{b_{2}}{n^2} + \cdots \right) \qquad (n \to \infty), \nonumber
\end{align}
where $L=L(n) \approx 2 \log \left( \frac{n \sqrt{N}}{\log(n \sqrt{N})} \right)$ is the unique solution of the equation \\ $n = \frac{1}{2} \left( \pi e^{\frac{L-\log(N)}{2}} - \frac{k}{2} +1 \right)L$ and each coefficient $b_{k}$ belongs to $\Q(L)$, the first value being $b_{1} = \frac{L^{4} + 9L^{3} + 32L^{2} + 24L +16}{24(L+2)^{3}}$.
\end{Thm}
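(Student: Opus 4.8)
The plan is to read this off from Theorem~\ref{Asy}: by Corollary~\ref{Mod} the completed modular $L$-function $\Lambda(f,s)$ is good, so it falls under the framework of Definition~\ref{HJ}, and it remains only to identify the data and substitute. In the notation of \eqref{completed}, the role of the theta-like function $f(t)-f(\infty)$ is played by $f(it)=\sum_{n\ge 1}a(n)e^{-2\pi n t}$; since $f$ is a cusp form there is no constant term, so $f(\infty)=0$; the sign in the functional equation is $\epsilon=i^{k}\epsilon_{f}\in\{\pm1\}$ by \eqref{Mfunctional} (using that $k$ is even); and the leading Fourier coefficient is $a(1)=1$. A rescaling of the integration variable in the Mellin integral for $\Lambda(f,s)$ brings its Archimedean factor $\Gamma_{\C}$ into the standard shape of Definition~\ref{HJ}, whose building block is $e^{-\pi n t}$ rather than the $e^{-2\pi n t}$ produced by $\Gamma_{\C}$, at the cost of replacing the level by a constant multiple of itself; after this normalization $\Lambda^{(n)}(f,\tfrac k2)$ is exactly the integral $F(n)$ of \eqref{F}, as one already sees from \eqref{Mderiv} and the displayed central-value formula following it.

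With these identifications I would then run the saddle-point argument verbatim as in the proof of Theorem~\ref{Asy}. One approximates the integrand of $F(n)$ by $g(t)=e^{-\pi t}N^{k/4}(1+(-1)^{n}i^{k}\epsilon_{f})t^{k/2-1}(\log N+2\log t)^{n}$, whose unique maximum on $(1/\sqrt N,\infty)$ occurs at the point where $n=\tfrac12\bigl(\pi e^{(L-\log N)/2}-\tfrac k2+1\bigr)L$ with $L=\log N+2\log(\text{maximizer})$; Lambert's $W$-function then gives $L\approx 2\log\!\bigl(n\sqrt N/\log(n\sqrt N)\bigr)$ exactly as before. Expanding $g((1+\lambda)a)/g(a)=e^{-C\lambda^{2}/2}(1+A_{3}\lambda^{3}+A_{4}\lambda^{4}+\cdots)$ with $C$ and the $A_{i}$ evaluated at the present weight $k$, integrating, and re-expressing $ag(a)$, $C$, and the $A_{i}$ as functions of $n$ via $L=L(n)$ produces the leading factor, the square-root factor, the exponential factor, and the correction series $1+b_{1}/n+b_{2}/n^{2}+\cdots$ of the statement. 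Collecting the $n^{-1}$ coefficient with $k$ substituted yields $b_{1}=\frac{L^{4}+9L^{3}+32L^{2}+24L+16}{24(L+2)^{3}}$, and each $b_{j}\in\Q(L)$ by the same rationality bookkeeping, only the $A_{2i}$ with $i\le 3j$ contributing.

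The passage from $\Lambda^{(n)}(f,\tfrac k2)$ to $\Xi^{(n)}(f,0)$ is then immediate from the relations recorded in Section~1 between $\Xi^{(n)}(0)$, $\Lambda^{(n)}(\tfrac k2)$, and $F(n)$; here $\Lambda(f,s)$ is entire, so $\Xi(f,z)=\Lambda(f,\tfrac k2-iz)$ and no pole term intervenes. The only step that is delicate rather than mechanical is the first one: pinning down the rescaling that reconciles the integral representation of $\Lambda(f,s)$ with the precise normalization demanded by Definition~\ref{HJ}, since the modular completion carries a $\Gamma_{\C}$ and the $q$-expansion of $f$ is in powers of $e^{-2\pi t}$; once the bookkeeping of these normalization constants is carried out, everything else is the specialization of Theorem~\ref{Asy}.
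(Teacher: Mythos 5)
Your proposal matches the paper's treatment: the paper offers no separate proof of this theorem, deriving it as the specialization of Theorem~\ref{Asy} to the good Dirichlet series $L(f,s)$ with $f(\infty)=0$, $\epsilon=i^{k}\epsilon_{f}$, and leading coefficient $a(1)=1$, exactly as you do. The normalization point you flag (the cusp form contributes $e^{-2\pi nt}$ while Definition~\ref{HJ} is written with $e^{-\pi nt}$, so one must either take $n_{0}=2$ or rescale the variable and adjust the level) is genuine and is in fact glossed over in the paper's own statement of the saddle-point equation, so your attention to it is if anything more careful than the source.
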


We have showed that the modular $L$-function $L(f,s)$ and does not have a pole so define
\begin{equation}
\Xi(f, z) := \Lambda \left(f, \frac{k}{2} -iz \right).
\end{equation}
Depending on the sign of the functional equation we define the Taylor coefficients by
\begin{equation}
\Xi_{1} (f,x) = \sum_{n \geq 0} \frac{\gamma_{f}(n)}{n!} x^{n} = \begin{cases} \Xi(i \sqrt{x})  & \text{ if } \ i^{k} \epsilon_{f} = 1 \\
\frac{\Xi(i \sqrt{x})}{\sqrt{x}} & \text{ if } \ i^{k} \epsilon_{f} = -1, \end{cases}
\end{equation}
where
\begin{equation}
\gamma_{f}(n) = \begin{cases} (-1)^{n} \frac{n!}{(2n)!} \cdot \Xi^{(2n)}(0) & \text{ if } \ i^{k} \epsilon_{f} = 1 \\
i^{2n+1} \frac{n!}{(2n+1)!} \cdot \Xi^{(2n+1)}(0) & \text{ if } \ i^{k} \epsilon_{f} =-1.
\end{cases}
\end{equation}
By Theorem \ref{Main} or from the asymptotic expansion above we have that if $d \geq 1$, then$J_{\gamma_{f}}^{d,n}(X)$ is hyperbolic with at most finitely many exceptions $n$.

\subsection{Dedekind zeta-functions}
\subsubsection{Proof of Corollary \ref{Ded}}
The Dedekind zeta-function case will require some setup and notation.  We will mostly follow the notation in \cite{N}.  Let $K$ be a number field of degree $j$ and $\mathcal{O}_{K}$ its ring of integers.  Denote the embeddings by $\sigma_{1}, \dots, \sigma_{r_{1}}, \rho_{1}, \overline{\rho}_{1}, \dots, \rho_{r_{2}}, \overline{\rho}_{r_{2}}$ where there are $r_{1}$ real embeddings and $r_{2}$ pairs of complex embeddings so that $r_{1} + 2r_{2} =j$.  Denote the class group of $K$ by $Cl(K)$.
Let ${\bf{C}} = \prod_{\tau} \C$ and ${\bf{R}} = \left[ \prod_{\tau} \C \right]^{+} = \{ z \in {\bf{C}} : z = \overline{z} \}$ be the Minkowski space of $K$ where $\overline{z} = \overline{(z_{\tau})} = (\overline{z}_{\overline{\tau}})$ is the usual complex conjugation and $\tau$ runs over the $j$ embeddings.  We define the trace and norm by 
\begin{equation}
Tr(z) = \sum_{\tau} z_{\tau} \qquad N(z) = \prod_{\tau} z_{\tau},
\end{equation}
and have a Hermitian scalar product given by
\begin{equation}
\langle x, y \rangle = \sum_{\tau} x_{\tau} \overline{y}_{\tau}.
\end{equation}
We will also require the spaces
\begin{equation}
{\bf{R}}_{\pm} = \left[ \prod_{\tau} \R \right]^{+} = \{ x \in {\bf{R}} : x_{\tau} = x_{\overline{\tau}} \}
\end{equation}
and 
\begin{equation}
{\bf{R}}_{+}^{*} = \left[ \prod_{\tau} \R_{+}^{*} \right]^{+} = \{ x \in {\bf{R}}_{\pm} :\forall \tau \  x_{\tau} >0 \}
\end{equation}
in order to define the two homomorphisms
\begin{align}
| \ | &: {\bf{R}}^{*} \to {\bf{R}}_{+}^{*} \qquad x = (x_{\tau}) \mapsto |x| = (|x_{\tau}|) \\
\log &: {\bf{R}}_{+}^{*} \xrightarrow{\sim} {\bf{R}}_{\pm} \qquad x = (x_{\tau}) \mapsto \log x = (\log x_{\tau}).
\end{align}
Let $\mathfrak{p}= \{ \sigma, \overline{\sigma} \}$ denote a conjugacy class of embeddings (so $\mathfrak{p}$ has one or two elements depending on whether the embedding is real or complex) and observe that there is an isomorphism between ${\bf{R}}_{+}^{*}$ and $\prod_{\mathfrak{p}} \R_{+}^{*}$.  We now have a Haar measure, which we denote by $\frac{dy}{y}$, that corresponds to the product measure $\prod_{\mathfrak{p}} \frac{dt}{t}$ where $\frac{dt}{t}$ is the usual Haar measure on $\R_{+}^{*}$.  We can now define a suitable generalization of the gamma function by
\begin{equation}
\Gamma_{K}(s) = 2^{(1-2s)r_{2}} \Gamma(s)^{r_{1}} \Gamma(2s)^{r_{2}} = \int_{{\bf{R}}_{+}^{*}} N(e^{-y} y^{s}) \frac{dy}{y}.
\end{equation}

The Dedekind zeta-function for $K$ is given by
\begin{equation}
\zeta_{K}(s) = \sum_{\substack{\mathfrak{a} \subset \mathcal{O}_{K} \\ \text{integral}}} N(\mathfrak{a})^{-s}
\end{equation}
for ${\rm{Re}}(s) >1$ where $N(\mathfrak{a}) = [\mathcal{O}_{K} : \mathfrak{a}]$ is the norm of the ideal $\mathfrak{a}$.  For each $B \in Cl(K)$ we define the partial zeta function by
\begin{equation}
\zeta_{B}(s) = \sum_{\substack{\mathfrak{a} \in B \\ \text{integral}}} N(\mathfrak{a})^{-s}.
\end{equation} 
We therefore have
\begin{equation}
\zeta_{K}(s) = \sum_{B \in Cl(K)} \zeta_{B}(s).
\end{equation}
We define the completed partial Dedekind zeta-function by
\begin{align}
\Lambda(B,s) &= |d_{K}|^{\frac{s}{2}} \pi^{-\frac{js}{2}} \Gamma_{K} \left( \frac{s}{2} \right) \zeta_{B}(s) \\
&= \int_{{\bf{R}}_{+}^{*}} g(iy) N(y)^{\frac{s}{2}} \frac{dy}{y} \nonumber
\end{align}
where $d_{K}$ is the discriminant of $K$ and $g$ is some theta function that we will not specify now.  The image of the unit group $\mathcal{O}_{K}^{*}$ under the mapping $| \ |:  {\bf{R}}^{*} \to {\bf{R}}_{+}^{*}$, which we will denote by $|\mathcal{O}_{K}^{*}|$, is contained in the norm-one hypersurface
\begin{equation}
S = \{ x \in {\bf{R}}_{+}^{*} : N(x) =1 \}.
\end{equation}
We obtain a direct decomposition ${\bf{R}}_{+}^{*} = S \times \R_{+}^{*}$ by writing
\begin{equation*}
y = xt^{\frac{1}{j}}, \qquad x = \frac{y}{N(y)^{\frac{1}{j}}}, \qquad t = N(y)
\end{equation*}
for any $y \in {\bf{R}}_{+}^{*}$.  We will need to choose a fundamental domain $F$ for the action of the group
\begin{equation*}
|\mathcal{O}_{K}^{*}|^{2} = \{ |\epsilon|^{2} : \epsilon \in \mathcal{O}_{K}^{*} \}
\end{equation*}
on $S$.  The $\log$ map $\log : {\bf{R}}_{+}^{*} \to {\bf{R}}_{\pm}$ takes $S$ to the trace-zero space
\begin{equation*}
H = \{ x \in {\bf{R}}_{\pm} : Tr(x) = 0 \}
\end{equation*}
and by Dirichlet's unit theorem the group $|\mathcal{O}_{K}^{*}|$ is taken to a complete lattice $G$ in $H$.  We may choose $F$ to be the preimage of any fundamental mesh of the lattice $2G$.  Now using this decomposition we have that
\begin{equation}
\Lambda(B,s) = \int_{0}^{\infty} (f(\mathfrak{a}, t) - f(\mathfrak{a}, \infty)) t^{\frac{s}{2}} \frac{dt}{t}
\end{equation}
where $B$ is the class of $\mathfrak{a}^{-1}$ and 
\begin{equation}
f(\mathfrak{a}, t) = f_{F}(\mathfrak{a}, t) = \frac{1}{w_{K}} \int_{F} \theta(\mathfrak{a}, ixt^{\frac{1}{j}}) d^{*}x.
\end{equation}
In the above equation $w_{K}$ is the number of roots of unity in $K$, $d^{*}x$ is the appropriate Haar measure such that $d^{*}x \times \frac{dt}{t} = \frac{dy}{y}$, and the theta function is defined by
\begin{equation}
\theta(\mathfrak{a}, z) = \sum_{a \in \mathfrak{a}}  e^{\pi i d_{\mathfrak{a}}^{-\frac{1}{j}} \langle az, a \rangle}
\end{equation} 
where $d_{\mathfrak{a}} = |N(\mathfrak{a})|^{2} |d_{K}|$ is the absolute value of the discriminant of $\mathfrak{a}$.  Using the properties of the theta function it is not difficult to show
\begin{equation}
f_{F}\left( \mathfrak{a}, \frac{1}{t} \right) = t^{\frac{1}{2}} f_{F^{-1}}( (\mathfrak{a} \mathfrak{d}_{K})^{-1}, t)
\end{equation}
and 
\begin{equation}
f_{F}(\mathfrak{a}, \infty) = f(\infty) =\frac{2^{r_{1} + r_{2} -1} R(K)}{w_{K}},
\end{equation}
where $F^{-1}$ is again a fundamental domain, $\mathfrak{d}_{K}$ is the different ideal of $K$, and $R(K)$ is the regulator of $K$.  Note that $(\mathfrak{a} \mathfrak{d}_{K})^{-1}$ is the dual lattice of $\mathfrak{a}$ and that $f(\infty)$ does not depend on the fundamental domain or ideal choice so we will supress notation whenever possible.  We now define the completed Dedekind zeta-function by
\begin{align}
\Lambda(K,s) &= \sum_{B \in Cl(K)} \Lambda(B,s) = |d_{K}|^{\frac{s}{2}} \pi^{-\frac{js}{2}} \Gamma_{K} \left(\frac{s}{2} \right) \zeta_{K}(s) \\
&= \frac{2^{r_{1} + r_{2}} R(K) h_{K}}{s(s-1) w_{K}} + \sum_{i=1}^{h_{K}} \int_{1}^{\infty} ( f(\mathfrak{a}_{i}, t) - f(\infty)) \left( t^{\frac{s}{2}} + t^{\frac{1-s}{2}} \right) \frac{dt}{t}, \nonumber
\end{align}
where $h_{K}$ is the class number of $K$ and if $B_{i}, 1 \leq i \leq h_{k}$ are ideal classes, then $B_{i}$ is the class of $\mathfrak{a}_{i}^{-1}$.  This shows that we have the functional equation
\begin{equation}
\Lambda(K, s) = \Lambda(K, 1-s).
\end{equation}
The completed Dedekind zeta-function has suitable integral representation, functional equation, and real coefficients so $\zeta_{K}(s)$ is good.  

\subsubsection{Derivatives at central values and Dedekind Jensen polynomials}
The $n$th derivative of the completed Dedekind zeta-function has the form
\begin{align}
\Lambda^{(n)}(K,s) &= \frac{2^{r_{1} + r_{2}} R(K) h_{K} \cdot n!}{w_{K}} \cdot \frac{(s-1)^{n+1} - s^{n+1}}{s^{n+1}(1-s)^{n+1}} \\
&+ \sum_{i=1}^{h_{K}} \frac{1}{2^{n}} \int_{1}^{\infty} \left( f(\mathfrak{a}_{i}, t) - f(\infty) \right) \left(t^{\frac{s}{2}} + (-1)^{n} t^{\frac{1-s}{2}} \right) \log^{n}(t) \frac{dt}{t}. \nonumber
\end{align}
At the central value $s=\frac{1}{2}$ we have
\begin{align}
\Lambda^{(n)} \left(K, \frac{1}{2} \right) &= \frac{2^{r_{1} + r_{2} +n +1} R(K) h_{K} ((-1)^{n+1} -1) n!}{w_{K}} \\
&+ \sum_{i=1}^{h_{K}} \frac{1}{2^{n}} \int_{1}^{\infty} \left( f(\mathfrak{a}_{i}, t) - f(\infty) \right) t^{\frac{1}{4}} (1+(-1)^{n}) \log^{n}(t) \frac{dt}{t}. \nonumber
\end{align}
In order to state the asymptotic expansion we need to find the first nonzero coefficient of each $f(\mathfrak{a}_{i} t)$.  Let $\epsilon$ be a unit with norm $1$, then the smallest nonzero exponent in $f(\mathfrak{a}_{i}, t)$ is given by
\begin{equation*}
m_{\mathfrak{a}_{i}} = {\rm{min}} \{ \langle a \epsilon, a \rangle : a \in \mathfrak{a}_{i}, a \neq 0 \}.
\end{equation*}
Let 
\begin{equation*}
M_{\mathfrak{a}_{i}} = \# \{a \in \mathfrak{a}_{i} : \langle a \epsilon, a \rangle = m_{\mathfrak{a}_{i}} \},
\end{equation*}
then the expansion of $f(\mathfrak{a}_{i}, t)$ begins
\begin{align}
f(\mathfrak{a}_{i}, t) &= f(\infty) + \frac{2^{r_{1} + r_{2} -1} R(K)}{w_{K}} M_{\mathfrak{a}_{i}} e^{-\pi m_{\mathfrak{a}_{i}} \left( \frac{t}{d_{\mathfrak{a}_{i}}} \right)^{\frac{1}{j}}} + \cdots.
\end{align}
We will let $C_{i} = \frac{2^{r_{1} + r_{2} -1} R(K)}{w_{K}} M_{\mathfrak{a}_{i}}$ and 
\begin{equation}
F_{i}(n) = \frac{1}{2^{n}} \int_{i}^{\infty} \left( f(\mathfrak{a}_{i}, t) - f(\infty) \right) t^{-\frac{3}{4}} (1+(-1)^{n}) \log^{n}(t) dt
\end{equation}
in order to simplify the next theorem.
\begin{Thm} Assume the notation above, then we have
\begin{equation}
\Lambda^{(n)} \left(K, \frac{1}{2} \right) = \frac{2^{r_{1} + r_{2} +n +1} R(K) h_{K} ((-1)^{n+1} -1) n!}{w_{K}} + \sum_{i=1}^{h_{K}} F_{i}(n)
\end{equation} 
and $F_{i}(n)$ is given to all orders by the asymptotic expansion
\begin{align}
F_{i}(n) & \sim \frac{C_{i} \sqrt{2 \pi} (1 + (-1)^{n})}{2^{n}} \frac{L_{i}^{n+1}}{\sqrt{n \left( 1 + \frac{L_{i}}{j} \right) - \frac{3}{4j} L_{i}^{2}}} \\
&\times e^{\frac{L_{i}}{4} - \frac{jn}{L_{i}} + \frac{3j}{4}} \left(1 + \frac{b_{i,1}}{n} + \frac{b_{i,2}}{n^2} + \cdots \right) \qquad (n \to \infty), \nonumber
\end{align}
where $L_{i}=L_{i}(n) \approx j \log \left(\frac{n}{\log(n)} \right)$ is the unique solution of the equation $n = \left(\frac{m_{\mathfrak{a}_{i}} d_{\mathfrak{a}_{i}}^{-\frac{1}{j}}}{j} \pi e^{\frac{L_{i}}{j}} + \frac{3}{4} \right) L_{i}$ and each coefficient $b_{i,k}$ belongs to $\Q(L_{i})$.
\end{Thm}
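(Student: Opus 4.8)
The argument runs parallel to the proof of Theorem~\ref{Asy}; the integral $F_i(n)$ has exactly the shape of \eqref{F} once the theta-series $\theta(\mathfrak{a}_{i},\cdot)$ is inserted, the only structural novelty being that the exponent now contains $(t/d_{\mathfrak{a}_{i}})^{1/j}$ in place of a term linear in $t$. First I would fix $n$ and replace the integrand of $F_i(n)$ by its leading term
\[
g_i(t) = C_i\, e^{-\pi m_{\mathfrak{a}_{i}}(t/d_{\mathfrak{a}_{i}})^{1/j}}\, t^{-3/4}\,\bigl(1+(-1)^n\bigr)\,(\log t)^n ,
\]
where $C_i = \tfrac{2^{r_1+r_2-1}R(K)}{w_K}\,M_{\mathfrak{a}_{i}}$ is read off from the first nonzero term of $f(\mathfrak{a}_{i},t)-f(\infty)$ displayed above. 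Exactly as in Theorem~\ref{Asy}, the ratio of $g_i(t)$ to the true integrand is $1+O(n^{-K})$ for every $K>0$, uniformly for $t$ near the saddle point, because the omitted terms of $f(\mathfrak{a}_{i},t)-f(\infty)$ are smaller by a fixed exponential factor; hence the two integrals have the same asymptotic expansion and only finitely many correction coefficients feed into each $b_{i,k}$. The lower endpoint of integration plays no role, since the saddle point tends to $\infty$.

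Next I would locate the maximum of $g_i$. Since
\[
t\,\frac{d}{dt}\log g_i(t) = \frac{n}{\log t} - \frac{\pi m_{\mathfrak{a}_{i}}}{j}\left(\frac{t}{d_{\mathfrak{a}_{i}}}\right)^{1/j} - \frac34 ,
\]
$g_i$ attains its unique maximum at a point $t=a_i$; writing $L_i := \log a_i$, the critical-point equation becomes
\[
n = \Bigl(\tfrac{m_{\mathfrak{a}_{i}}\,d_{\mathfrak{a}_{i}}^{-1/j}}{j}\,\pi e^{L_i/j} + \tfrac34\Bigr)L_i ,
\]
which is precisely the equation in the statement, and Lambert's $W$ function gives $L_i \approx j\log(n/\log n)$. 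I would then run the saddle point method as in the proof of Theorem~\ref{Asy} (following \cite{GORZ}): substitute $t=(1+\lambda)a_i$, expand $\log\bigl(g_i((1+\lambda)a_i)/g_i(a_i)\bigr)$ in powers of $\lambda$ (the linear term vanishes by the choice of $a_i$, the quadratic term is $-\tfrac12 C\lambda^2$ with $C=C(n)$ a degree-one polynomial in $n$ with coefficients in $\mathbb{Q}[L_i^{-1}]$, and the higher coefficients $A_m$ are polynomials in $n$ of degree $\lfloor m/3\rfloor$ with coefficients in $\mathbb{Q}[L_i^{-1}]$), exponentiate, and integrate term by term against the Gaussian by means of $\int_{\mathbb{R}}\lambda^{2i}e^{-C\lambda^2/2}\,d\lambda = (2i-1)!!\,\sqrt{2\pi/C}\,C^{-i}$. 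Assembling $a_i\,g_i(a_i)\,\sqrt{2\pi/C}\,\bigl(1+3A_4/C^2+\cdots\bigr)$ and substituting $g_i(a_i) = C_i\,\bigl(1+(-1)^n\bigr)\,e^{-jn/L_i}\,a_i^{-3/4}L_i^{n}$ together with the value of $C(n)$ then reproduces the claimed expansion, with each $b_{i,k}\in\mathbb{Q}(L_i)$.

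An alternative, essentially bookkeeping-free route is the substitution $t = d_{\mathfrak{a}_{i}}\,u^{j}$, which converts $F_i(n)$ into an integral of exactly the form \eqref{F} (with $\pi n_0$ replaced by $\pi m_{\mathfrak{a}_{i}}$, together with appropriate constants and an additive shift $\log d_{\mathfrak{a}_{i}}$ inside the logarithm), so that Theorem~\ref{Asy} applies almost verbatim after relabelling. Either way, the one genuinely new feature---the $1/j$-th power in the exponent---changes the shape of the saddle equation (producing $e^{L_i/j}$ in place of $e^{(L-\log N)/2}$) and rescales $C(n)$, but does not disturb the Laplace expansion. I expect the main difficulty to be purely computational: tracking how the factors $d_{\mathfrak{a}_{i}}^{-1/j}$ and the field degree $j$ propagate through $C(n)$ and the coefficients $A_m$, and checking that the resulting $\delta_i(n)^2$ is positive and tends to $0$, so that after summing over the finitely many ideal classes and adjoining the explicit rational-function pole term the Dedekind coefficient sequence remains Hermite-Jensen in the sense of Definition~\ref{growth}.
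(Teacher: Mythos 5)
Your proposal is correct and follows essentially the same route the paper intends (the paper states this theorem without writing out a proof, implicitly invoking the saddle-point method of Theorem \ref{Asy}): your critical-point computation $t\frac{d}{dt}\log g_i(t)=\frac{n}{\log t}-\frac{\pi m_{\mathfrak{a}_i}}{j}(t/d_{\mathfrak{a}_i})^{1/j}-\frac34$ yields exactly the stated saddle equation, and the resulting quadratic coefficient $C=\frac{n}{L_i^2}\left(1+\frac{L_i}{j}\right)-\frac{3}{4j}$ reproduces the square-root denominator $\sqrt{n\left(1+\frac{L_i}{j}\right)-\frac{3}{4j}L_i^2}$ together with the exponential factor $e^{L_i/4-jn/L_i+3j/4}$. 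You also correctly isolate the one genuine novelty — the $(t/d_{\mathfrak{a}_i})^{1/j}$ exponent, which takes $f(\mathfrak{a}_i,t)$ outside the literal form of Definition \ref{HJ} — and your substitution $t=d_{\mathfrak{a}_i}u^{j}$ is a clean way to reduce back to the situation of Theorem \ref{Asy}.
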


We have shown that $\zeta_{K}(s)$ is good so define 
\begin{equation}
\Xi(z) := \left(-z^{2} - \frac{1}{4} \right) \Lambda \left(K, \frac{1}{2} -iz \right)
\end{equation}
and 
\begin{equation}
\Xi_{1}(x) := \Xi(i \sqrt{x}) = \sum_{n \geq 0} \frac{\gamma_{K}(n)}{n!} x^{n}
\end{equation}
where the Taylor coefficients are given by
\begin{equation}
\gamma_{K}(n) = (-1)^{n} \frac{n!}{(2n)!} \cdot \Xi^{(2n)}(0).
\end{equation}
The derivatives $\Xi^{(2n)}(0)$ are given by
\begin{equation*}
\Xi^{(2n)}(0) = (-1)^{n} \sum_{i=1}^{h_{K}} \frac{8 \binom{2n}{2} F_{i}(2n-2) - F_{i}(2n)}{4}
\end{equation*}
and so we can use the above asymptotic expansion above or Theorem \ref{Main} to show that if $d \geq 1$, then $J_{\gamma_{K}}^{d,n}(X)$ is hyperbolic with at most finitely many exceptions $n$.

\end{document}